\newcommand{\eps}{\varepsilon}
\newtheorem{thm}{Theorem}
\newtheorem{prop}[thm]{Proposition}
\newtheorem{lem}[thm]{Lemma}
\begin{document}

\pagenumbering{arabic}

\title{Defective Galton-Watson processes 
}
\author{Serik Sagitov\footnote{Department of Mathematical Sciences, Chalmers University of Technology and University of Gothenburg, Gothenburg, Sweden.}\hspace*{1ex} and
Carmen Minuesa\footnote{Department of Mathematics, University of Extremadura, Badajoz, Spain. E-mail address: cminuesaa@unex.es}}
\date{}
\maketitle

\begin{abstract}
 The Galton-Watson process is a Markov chain modeling the population size of independently reproducing particles giving birth to $k$ offspring with probability $p_k$, $k\ge0$. In this paper we consider {\it defective} Galton-Watson processes having defective reproduction laws, so that  $\sum_{k\ge0}p_k=1-\eps$ for some $\eps\in(0,1)$. In this setting, each particle may send the process to a graveyard state $\Delta$ with probability $\eps$.
Such a Markov chain, having an enhanced state space $\{0,1,\ldots\}\cup\{\Delta\}$, gets eventually absorbed either at $0$ or at $\Delta$.
Assuming that the process has avoided absorption until the observation time $t$, we are interested in its trajectories as $t\to\infty$
and $\eps\to0$.
\end{abstract}

\textbf{Keywords}: branching process; defective distribution; Galton-Watson process with killing; conditional limit theorems.

\textbf{MSC}: 60J80.

\section{Introduction}

The classical Galton-Watson process (GW-process) is a discrete time Markov chain  $Z(\cdot)$ with the state space $\{0,1,\ldots\}$ defined recursively by
\begin{equation}\label{def:model}
Z(0)=1,\quad Z(t+1)=\sum_{j=1}^{Z(t)}\nu_{t,j},\quad t=0,1,\ldots,
\end{equation}
where $\nu_{t,j}\stackrel{d}{=}\nu$ are  independent  
random variables with a common distribution
 \begin{equation}\label{fs}
f(s)=Es^\nu=\sum_{k\ge0} p_k s^k.
\end{equation}
In terms of probability generating functions,  the branching property \eqref{def:model} yields
\begin{equation}\label{iter}
Es^{Z(t)}=f(t,s),\quad  f(0,s)=s,\quad f(t+1,s)=f(f(t,s)),\quad t\ge0.
\end{equation}
There are two types of trajectories for this simple demographic  model: a GW-process either becomes extinct at time $T_0=\inf\{t\ge1: Z(t)=0\}$ or $Z(t)\to\infty$ as $t\to\infty$. It is well known that the corresponding probability of extinction $q=P(T_0<\infty)$ is given by the smallest non-negative root of the equation $f(s)=s$, see  \cite[Ch I.5]{Athreya-Ney}. Much of the theory of branching processes is devoted to the limit behavior of $Z(t)$ conditioned on $T_0>t$ as $t\to\infty$, see  \cite{Melard-2012}.

This paper deals with {\it defective GW-processes} having  $f(1)\in(0,1)$. We treat the defect $\eps=1-f(1)$ of the reproduction law \eqref{fs} as the  probability that a given particle at a given time $t$ sends the Markov chain $Z(t+1)$ to an  additional  graveyard state $\Delta$.
Thus, a defective GW-process becomes a Markov chain with a countable  state space $\mathbb N_\Delta= \{0,1,\ldots\}\cup\{\Delta\}$. Two of the states are absorbing: the process either becomes extinct at time $T_0$, or is stopped at time $T_\Delta=\inf\{t\ge1: Z(t)=\Delta\}$. If $T=T_0\wedge T_\Delta$ denotes the ultimate absorption time, then for some $q\in [0,1)$,
\[P(T_0<\infty)=q,\quad P(T_\Delta<\infty)=1-q,\quad P(T<\infty)=1.\]
Applying the graveyard absorption properties 
$$\Delta+x=\Delta, \quad x\in\mathbb N_\Delta, \qquad s^\Delta=0,\quad s\ge0,\qquad \sum_{j=1}^\Delta x_j=\Delta, \quad x_j\in\mathbb N_\Delta,$$
to the recursion \eqref{def:model}, we obtain again \eqref{iter} implying $f(q)=q$.
Clearly,
 $P(Z(t)=\Delta)=1-f(t,1)$, and  if $q=0$, then $T=T_\Delta$.
It is straightforward to see that
\[E(s^{Z(t)};T_\Delta>t)=f(t,s),\quad E(s^{Z(t)};T>t)=E(s^{Z(t)};T_0>t)=f(t,s)-f(t,0),\]
since
\[E(s^{Z(t)};T\le t)=E(s^{Z(t)};T_0\le t)=P(Z(t)=0)=f(t,0).\]
This implies,
\begin{align*}
 P(t<T_\Delta<\infty)&=f(t,1)-q,\\
P(t<T_0<\infty)&=q-f(t,0), \\
P(T>t)&=f(t,1)-f(t,0).
\end{align*}

The main focus of this paper is the asymptotic distribution of  $Z(t-k)$ conditioned on the survival event $\{T>t\}$ as $t\to\infty$, with $k\in[0,t]$ either being fixed or going to infinity.
In Section \ref{secA} we present limit theorems assuming that the reproduction law $f(\cdot)$ is fixed. We will see that with fixed $f(\cdot)$, there are two different asymptotic regimes depending on whether $\gamma>0$ or $\gamma=0$, where  $\gamma=f'(q)$. The proofs of the results of Section \ref{secA} are collected in Section \ref{pr1}.

In realistic settings, the defect $\eps$ of the reproduction is small and therefore it is interesting to find asymptotic results as $t\to\infty$ and $\eps\to0$. To address this issue in Sections \ref{Sext} and \ref{theta-branching} we consider sequences of defective GW-processes $(Z_n(\cdot))_{n\ge1}$ governed by reproduction laws $f_n(\cdot)$ such that $\eps_n\to0$ as $n\to\infty$ and  $ f_n(s)\to \hat f(s)$ uniformly over $s\in[0,1]$, where $\hat f(1)=1$. It turns out that with this approach, a key parameter determining the limit behavior is not $\gamma$ as in Section \ref{secA}, but rather $\hat m=\hat f'(1)$.  We assume $\hat m>1$ and even consider the case $\hat m=\infty$.
The proofs of the results of Sections \ref{Sext} and \ref{theta-branching}  are collected in Section \ref{pr2}.

Earlier, a special subclass of the defective GW-processes, the  so-called GW-processes with killing, was studied in \cite{Karlin-Tavare-1982, Pakes-1984}. A GW-process with killing
has a reproduction law of the form $f(s)=g(\alpha s)$, where  $g(\cdot)$ is a non-defective generating function and $\alpha\in(0,1)$. In this case $f(1)\in(0,1)$ and $f(s_0)=1$ for $s_0=1/\alpha>1$.
To see a counterexample  violating the latter restriction, consider
\begin{equation}\label{xet}
 f_0(s)=1-(p_1\sqrt{1-s}+1-p_1)^2,\quad s\in[0,1],
\end{equation}
having $f_0(1)=p_1(2-p_1)$ and
\[f_0(t,s)=1-(p_1^t\sqrt{1-s}+1-p_1^t)^2.\]
Since $f'_0(1)=\infty$, the generating function $f_0(s)$ is not defined for $s>1$.
Example \eqref{xet} belongs to a parametric family of defective generating functions with  explicit  iterations: in \cite{Sagitov-Lindo-2016} the corresponding family of GW-processes is called theta-branching processes.
We turn to the theta-branching processes in Section \ref{theta-branching}. A broad class of continuous time defective branching processes was investigated in  \cite{Sagitov-2017}.

Defective GW-processes arise naturally in the framework of some special non-defective GW-processes with countably many types. For example, the authors of \cite{Braunsteins-Hautphenne-2017} construct an embedded defective GW process in which absorption in the graveyard state corresponds to local survival of the GW-process with countably many types, and absorption in state 0 corresponds to its global extinction. In another multi-type setting, \cite{Sagitov-Serra-2009} treat the defect $\eps$ as the probability of a favorable mutation allowing a population of viruses to escape extinction.

Notice that the defective GW-processes can be put into the framework of $\phi$-branching processes  using a random control function
\[
\phi(k)=\left\{
\begin{array}{llr}
  k&  \text{with probability} & (1-\eps)^k,  \\
 \Delta  & \text{with probability} &1- (1-\eps)^k,
\end{array}
\right. \quad k\ge0,
\]
cf. \cite{Yanev-75}.  Indeed, in the defective case, the branching property \eqref{def:model} can be rewritten as
\[
Z(t+1)=\sum_{j=1}^{\phi_t(Z(t))}\tilde \nu_{t,j},\quad t=0,1,\ldots,
\]
where  $\phi_t(\cdot)\stackrel{d}{=}\phi(\cdot)$.
Here the common distribution of the random variables $\tilde \nu_{t,j}$ has a proper probability generating function $f(\cdot)/f(1)$. For a given small value of $\eps$, the control function gets a chance  to stop the growth of a non-defective GW-process, when the population size $k$ becomes inverse-proportional to $\eps$, that is when the stopping probability $1- (1-\eps)^k$ is approximated by $1- e^{-\eps k}$.


\section{Limit theorems with fixed reproduction law}\label{secA}

In this section we assume that the defective reproduction law $f(\cdot)$ is fixed while the observation time $t$ tends to infinity. Recall that $q\in[0,1)$ is defined by $q=f(q)$ and $\gamma=f'(q)$. Observe that $\gamma\in[0,1)$ and denote
$$l=\min\{k\ge0:p_k> 0\}.$$
Clearly, $q=0$ if and only if ${l}\ge1$, and $\gamma=0$ if and only if ${l}\ge2$.  Define $\pi_t=\gamma^{t}$ for $l=0,1$, and
$$\pi_t=\prod_{k=0}^{t-1}p_{l}^{\ l^k}=p_{l}^{a_t},\quad a_t={l^{t}-1\over l-1},\quad t\ge1,$$
for $l\ge2$. Observe that given $l\ge1$,  the minimal $t$-th generation size is $l^t$ and
$$P(Z(t)=l^t)=\pi_t,$$

\begin{prop}\label{HR}
Consider iterations $f(t,\cdot)$ of a defective probability generating function $f(\cdot)$.
\begin{enumerate}[label=(\alph*)]
\item If $\gamma>0$, then for each $s\in [0,1]$,
$$f(t,s)-q\sim (s-q)H(s)\pi_t,\quad t\to\infty,$$
where  $H(\cdot)$ is a generating function defined as
$$H(s)=\prod_{j=0}^\infty h(f(j,s)),\quad h(s)={f(s)-q\over (s-q)\gamma},$$
and having $H(q)=1$, $H(1)<\infty$.
\item If $\gamma=0$, then  for each $s\in [0,1]$,
 \begin{equation*}
 f(t,s)\sim(sR(s))^{{l}^t} \pi_t , \quad t\to\infty,
\end{equation*}
where $R(\cdot)$ is a generating function defined as
$$R(s)=\prod_{j=0}^\infty (b(f(j,s)))^{ l^{-j-1}},\quad b(s)={f(s)\over p_ls^l},$$
and having
$$1=R(0)<R(1)<p_l^{-1/(l-1)}.$$
\end{enumerate}
\end{prop}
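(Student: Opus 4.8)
The plan is to treat both cases by telescoping the orbit $(f(j,s))_{j\ge0}$, with different ``shift and scale'' bookkeeping, and I expect the real work to lie in the $\gamma=0$ case. For part (a), I would use that $f$ maps $[0,1]$ into itself, is nondecreasing and (by $\gamma=f'(q)>0$) strictly increasing, with $q\le f(1)<1$ and $f(j,s)\to q$ for every $s\in[0,1]$ (a standard consequence of convexity and $q$ being the smallest fixed point). Since $h$ is analytic at the interior point $q$ with $h(q)=\lim_{u\to q}(f(u)-q)/((u-q)\gamma)=f'(q)/\gamma=1$, I would start from the exact identity
\[
f(t,s)-q=(s-q)\prod_{j=0}^{t-1}\frac{f(f(j,s))-q}{f(j,s)-q}=(s-q)\,\gamma^{t}\prod_{j=0}^{t-1}h(f(j,s)),
\]
recalling $\gamma^{t}=\pi_t$ for $l\le 1$. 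The claim then reduces to showing that $\prod_{j\ge0}h(f(j,s))$ converges to a finite positive $H(s)$. For that I would note $h>0$ on $[0,1]$ (numerator and denominator change sign together at $q$, since $f$ is strictly increasing with $f(q)=q$), that $h(u)-1=O(u-q)$ as $u\to q$ by analyticity, and that $\limsup_{u\to q}|f(u)-q|/|u-q|=\gamma<1$ forces $|f(j,s)-q|$ to decay geometrically once the orbit enters a small neighbourhood of $q$; hence $\sum_{j\ge0}|h(f(j,s))-1|<\infty$, the product converges absolutely to $H(s)\in(0,\infty)$, and $H(q)=\prod_j1=1$, $H(1)=\prod_j h(f(j,1))<\infty$ follow at once. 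That $H$ is a genuine generating function I would get from the expansion $s^k-q^k=(s-q)\sum_{i=0}^{k-1}s^iq^{k-1-i}$, which gives $(f(s)-q)/(s-q)=\sum_{i\ge0}\big(\sum_{k>i}p_kq^{k-1-i}\big)s^i$, a power series with nonnegative coefficients; hence so is $h$, and nonnegativity is preserved under the composition $h\circ f(j,\cdot)$ and under the convergent product.

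For part (b), $\gamma=0$ is equivalent to $l\ge2$, so $q=0$, $p_0=p_1=0$, and $f(s)=p_ls^{l}b(s)$ with $b(s)=f(s)/(p_ls^l)=1+(p_{l+1}/p_l)s+\cdots$ a power series with nonnegative coefficients, $b(0)=1$, $b$ nondecreasing on $[0,1]$, $b(1)=(1-\eps)/p_l$, and $f(j,s)\to0$ for every $s\in[0,1]$. Writing $R_t(s)=\prod_{j=0}^{t-1}(b(f(j,s)))^{l^{-j-1}}$ (with $R_0\equiv1$), I would first prove by induction on $t$ the \emph{exact} identity
\[
f(t,s)=(sR_t(s))^{l^{t}}\pi_t,\qquad t\ge0,\ s\in[0,1],
\]
using $f(t+1,s)=p_lf(t,s)^{l}b(f(t,s))$ together with $\pi_{t+1}=p_l\pi_t^{l}$ (i.e. $a_{t+1}=la_t+1$) and $R_{t+1}=R_t\cdot(b(f(t,\cdot)))^{l^{-t-1}}$. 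Since $\log b$ is bounded on $[0,1]$ and $\sum_j l^{-j-1}=1/(l-1)<\infty$, the partial products $R_t(s)$ converge to $R(s)=\prod_{j\ge0}(b(f(j,s)))^{l^{-j-1}}$; the same estimates give $R(0)=1$ and $R(1)=\prod_j(b(f(j,1)))^{l^{-j-1}}\le b(1)^{1/(l-1)}=((1-\eps)/p_l)^{1/(l-1)}<p_l^{-1/(l-1)}$, while $R(1)>1$ because $b(f(j,1))>b(0)=1$ for every $j$ whenever $p_k>0$ for some $k>l$ (if $f(s)=p_ls^l$ then everything is explicit and $R\equiv1$).

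The one genuinely delicate point, which I expect to be the main obstacle, is that $R_t\to R$ does \emph{not} by itself yield $f(t,s)\sim(sR(s))^{l^{t}}\pi_t$: passing from the exact identity to the asymptotic one divides by $(sR(s))^{l^{t}}$, and the ratio $R_t(s)/R(s)\to1$ is raised to the exploding power $l^{t}$. I would resolve this by the telescoping computation
\[
\Big(\tfrac{R(s)}{R_t(s)}\Big)^{l^{t}}=\exp\Big(\sum_{i\ge0}l^{-i-1}\log b(f(t+i,s))\Big),
\]
and then observe that $|\log b(f(t+i,s))|\le M:=\sup_{[0,1]}|\log b|$ for all $i$, while $\log b(f(t+i,s))\to0$ as $t\to\infty$ for each fixed $i$ (as $f(t+i,s)\to0$); dominated convergence forces the exponent to $0$, so the correction factor tends to $1$ and the claimed equivalence follows from the exact identity. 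One caveat worth flagging: ``generating function'' for $R$ should be read as ``analytic function on a neighbourhood of $[0,1]$'' — unlike $H$ in part (a), $R$ need not have all coefficients nonnegative.
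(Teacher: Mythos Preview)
Your argument is correct and follows the same telescoping-product skeleton as the paper, but your execution differs in a few places worth noting. In part (a) the paper gets convergence of $H_t=\prod_{j<t}h(f(j,\cdot))$ by monotonicity ($H_{t}$ is decreasing on $[0,q)$ and increasing on $(q,1]$), and then separately bounds $\sum_j(h(f(j,1))-1)$ using the convexity estimate $f(t,1)-q\le(1-q)(1-\eps/(1-q))^t$; your single absolute-convergence argument via $h(u)-1=O(u-q)$ plus geometric decay of $|f(j,s)-q|$ is equivalent and arguably cleaner. In part (b) the paper writes down the same exact identity $f(t,s)=\pi_t(sR_t(s))^{l^t}$ but outsources the crucial step $(R_t/R)^{l^t}\to1$ to Athreya's 1994 paper; your explicit computation $\big(R/R_t\big)^{l^t}=\exp\big(\sum_{i\ge0}l^{-i-1}\log b(f(t+i,s))\big)\to1$ by dominated convergence is self-contained and is exactly the right way to handle the exploding exponent you flagged. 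For the bound on $R(1)$ the paper argues indirectly (if $\rho=p_l^{1/(l-1)}R(1)\ge1$ then $f(t,1)\sim p_l^{-1/(l-1)}\rho^{l^t}$ would not tend to $0$), whereas your direct estimate $R(1)\le b(1)^{1/(l-1)}=((1-\eps)/p_l)^{1/(l-1)}<p_l^{-1/(l-1)}$ is sharper and more transparent. Your caveat about $R$ being a ``generating function'' is fair: the paper asserts this without proof, and the fractional powers $(b(\cdot))^{l^{-j-1}}$ do not obviously preserve coefficient positivity, so reading it as ``analytic with nonnegative values on $[0,1]$'' is the safe interpretation.
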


Proposition \ref{HR} indicates that there are two different asymptotic regimes depending on whether $\gamma>0$ or $\gamma=0$. An immediate consequence of Proposition \ref{HR}-a  is
\begin{align}\label{tail1}
 \gamma^{-t} P(T>t)&\to
 qH(0)+(1-q)H(1), \quad t\to\infty,
 \end{align}
which implies
\begin{equation*}
P(T=t+k | T\geq t)\to (1-\gamma)\gamma^k, \quad k\ge1.
\end{equation*}
As it is shown next by Theorem \ref{MC}, devoted to the case $\gamma>0$, relation  
\begin{equation}\label{qj}
 \frac{(s-q)H(s)+qH(0)}{(1-q)H(1)+qH(0)}=\sum_{j\ge1} q_js^j
\end{equation}
defines an important proper distribution $(q_j)_{j\ge1}$. 

\begin{thm}\label{MC}
Consider a defective GW-process with $\gamma> 0$.
\begin{enumerate}[label=(\alph*)]
\item The asymptotic relation \eqref{tail1} holds, and for  $k\ge0,  j\ge1$,
\begin{equation*}
P(Z(t-k)=j|T>t)\to q_{k,j} ,\quad t\to\infty,
\end{equation*}
where $(q_{k,j})_{j\ge1}$ is a proper probability distribution defined by
\begin{equation}\label{qkqj}
q_{k,j}=q_{j}\gamma^{-k}(f_{k}(1)^{j}-f_{k}(0)^{j}),
\end{equation}
so that  $q_{0,j}\equiv q_j$ are given by \eqref{qj}.
\item For $j_0\ge1,\ldots,j_k\ge1$,  $k\ge0$,
\begin{equation*}
P(Z(t)=j_0,\ldots,Z(t-k)=j_k| T> t) \to q_{k,j_k}Q_{j_k,j_{k-1}}^{(k)}
\cdots Q_{j_{1},j_{0}}^{(1)},\quad \quad t\to\infty,
\end{equation*}
where
\begin{equation*}
Q_{ij}^{(k)}= \frac{f_{k-1}(1)^{j}-f_{k-1}(0)^{j}}{f_{k}(1)^{i}-f_{k}(0)^{i}}P_{ij},\quad \sum_{j\ge1}Q_{ij}^{(k)}=1, \quad i\ge1,
\end{equation*}
is a transformation of the time-homogeneous  transition probabilities
$$P_{ij}=P(Z(t+1)=j|Z(t)=i).$$
\end{enumerate}
\end{thm}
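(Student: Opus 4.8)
The plan is to prove both parts of Theorem \ref{MC} starting from the generating function asymptotics in Proposition \ref{HR}-a, exploiting the identity $E(s^{Z(t)};T>t)=f(t,s)-f(t,0)$ established in the introduction. For part (a) with $k=0$, I would write
\[
E(s^{Z(t)}\mid T>t)=\frac{f(t,s)-f(t,0)}{f(t,1)-f(t,0)},
\]
and apply Proposition \ref{HR}-a to numerator and denominator: dividing top and bottom by $\pi_t=\gamma^t$ and letting $t\to\infty$, the numerator tends to $(s-q)H(s)+qH(0)$ (using $f(t,0)-q\sim(0-q)H(0)\pi_t$, i.e.\ $q-f(t,0)\sim qH(0)\pi_t$) and the denominator tends to $(1-q)H(1)+qH(0)$; this is exactly \eqref{tail1} once the $s=1$ case is read off, and it identifies the limiting generating function as $\sum_j q_j s^j$ from \eqref{qj}. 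Continuity of generating functions on $[0,1]$ together with convergence of pgf's on $[0,1]$ gives convergence of the coefficients, so $P(Z(t)=j\mid T>t)\to q_j$; properness of $(q_j)$ follows because the limit pgf equals $1$ at $s=1$.

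For general $k\ge0$ in part (a), the key is the Markov/branching decomposition: conditionally on $Z(t-k)=i$, the event $\{T>t\}$ means that none of the $i$ independent subtrees started at time $t-k$ has been absorbed by time $t$ into $\Delta$ (nor has the whole thing hit $0$). Since each subtree survives to time $t$ (in the sense $T>t$ within that subtree) with probability $f_k(1)-f_k(0)$... more precisely, I would use
\[
P(T>t\mid Z(t-k)=i)=f(k,1)^{i}-f(k,0)^{i},
\]
which follows from the branching property: $P(Z(t)=\Delta\mid Z(t-k)=i)=1-f(k,1)^i$ and $P(Z(t)=0\mid Z(t-k)=i)=f(k,0)^i$. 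Then by Bayes,
\[
P(Z(t-k)=j\mid T>t)=\frac{P(Z(t-k)=j)\,(f(k,1)^{j}-f(k,0)^{j})}{P(T>t)}.
\]
Now $P(Z(t-k)=j)=P(Z(t-k)=j\mid T>t-k)\,P(T>t-k)$; applying the $k=0$ result to the first factor gives $\to q_j$, and using \eqref{tail1} twice, $P(T>t-k)/P(T>t)\to\gamma^{-k}$. Combining yields $q_{k,j}=q_j\gamma^{-k}(f(k,1)^j-f(k,0)^j)$, which is \eqref{qkqj}. Properness of $(q_{k,j})_{j\ge1}$ is then checked by summing: $\sum_j q_j(f(k,1)^j-f(k,0)^j)=\gamma^k$, which follows from the $k=0$ pgf identity evaluated via \eqref{tail1} — indeed $\sum_j q_j f(k,1)^j$ and $\sum_j q_j f(k,0)^j$ are the limit pgf at $s=f(k,1)$ and $s=f(k,0)$, and their difference telescopes using $f(t,f(k,\cdot))=f(t+k,\cdot)$ back to the definition of $P(T>t)$.

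For part (b), I would again use the branching property, now tracking the whole path. For a path $Z(t)=j_0,\dots,Z(t-k)=j_k$, write
\[
P(Z(t)=j_0,\dots,Z(t-k)=j_k, T>t)=P(Z(t-k)=j_k)\prod_{i=1}^{k}P_{j_i,j_{i-1}},
\]
since on this path, survival past $t$ is automatic once $Z(t)=j_0\ge1$ is reached (no state is $0$ or $\Delta$), so the indicator of $\{T>t\}$ is already accounted for; here $P_{j_i j_{i-1}}=P(Z(t-k+1)=j_{i-1}\mid Z(t-k)=j_i)$ by time-homogeneity. Dividing by $P(T>t)$ and inserting the factors $f(k-r,1)^{j_r}-f(k-r,0)^{j_r}$ that telescope, I would rewrite the ratio as
\[
q_{k,j_k}\cdot\prod_{r=1}^{k}\frac{f(k-r,1)^{j_{r-1}}-f(k-r,0)^{j_{r-1}}}{f(k-r+1,1)^{j_r}-f(k-r+1,0)^{j_r}}\,P_{j_r j_{r-1}},
\]
which is exactly $q_{k,j_k}Q^{(k)}_{j_k,j_{k-1}}\cdots Q^{(1)}_{j_1,j_0}$ with $Q^{(m)}_{ij}$ as defined (setting $m=k-r+1$). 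The normalization $\sum_{j\ge1}Q^{(m)}_{ij}=1$ follows from $\sum_j (f(m-1,1)^j-f(m-1,0)^j)P_{ij}=f(m,1)^i-f(m,0)^i$, which is just $E(s^{Z(t+1)};T>t+1\mid Z(t)=i)$ evaluated appropriately — equivalently, one application of the branching recursion $f(t+1,s)=f(f(t,s))$ with $s=1$ and $s=0$. I would close part (b) by noting that the finite-dimensional limit pgf's determine the limiting law of the reversed path, which is a (time-inhomogeneous) Markov chain with transitions $Q^{(m)}$ and initial distribution $q_{k,\cdot}$, consistent across $k$.

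The main obstacle I anticipate is bookkeeping rather than depth: making the telescoping of the factors $f(m,1)^j-f(m,0)^j$ fully rigorous, and justifying the interchange of limit and summation when passing from convergence of generating functions to convergence of coefficients (standard, via the continuity theorem for pgf's on $[0,1]$, but it must be invoked cleanly for the joint/path statement in part (b), where one should first establish convergence of the finite-dimensional distributions directly from the explicit probability ratios above rather than through a multivariate pgf). A secondary point requiring care is the use of \eqref{tail1} to get $P(T>t-k)/P(T>t)\to\gamma^{-k}$ — this needs $\gamma>0$, which is exactly the standing hypothesis of the theorem, and it needs the full strength of the $\sim$ in \eqref{tail1}, not merely that $P(T>t)\to0$.
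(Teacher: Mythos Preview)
Your proposal is correct and matches the paper's approach closely for part (a) with $k=0$ and for all of part (b): both arguments compute $E(s^{Z(t)}\mid T>t)$ via Proposition~\ref{HR}-a, and both handle the joint distribution by the Markov decomposition $P(Z(t-k)=j_k)\prod P_{j_i,j_{i-1}}$ followed by the telescoping insertion of the factors $f(m,1)^{j}-f(m,0)^{j}$.

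For part (a) with general $k$ you take a slightly different route than the paper. The paper works at the generating-function level: it uses the identity
\[
E\bigl(s^{Z(t-k)}\mid T>t\bigr)=\frac{f(t-k,sf(k,1))-f(t-k,sf(k,0))}{f(t,1)-f(t,0)},
\]
applies Proposition~\ref{HR}-a directly to numerator and denominator, and then simplifies the resulting expression using the functional equation $(f(s)-q)H(f(s))=\gamma(s-q)H(s)$ (iterated $k$ times) to extract the factor $\gamma^{-k}$ and recognise the remaining power series as $\sum_j q_j\bigl((sf(k,1))^j-(sf(k,0))^j\bigr)$. Your argument instead works coefficient-by-coefficient via Bayes, combining the already-established $k=0$ limit $P(Z(t-k)=j\mid T>t-k)\to q_j$ with the tail ratio $P(T>t-k)/P(T>t)\to\gamma^{-k}$ obtained from \eqref{tail1}. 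Your route is marginally more elementary in that it never invokes the functional equation for $H$; the paper's route has the advantage of delivering the limiting pgf in closed form, from which properness of $(q_{k,j})$ is immediate by evaluating at $s=1$, whereas you have to argue it separately. Both are short and valid.
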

We see that in the case $\gamma>0$, the conditional branching process asymptotically behaves as a time-inhomogeneous Markov chain.
 Observe that given $q\in(0,1)$, the limit towards the past
\begin{equation*}
Q_{ij}^{(k)}\to\frac{P_{ij}j q^{j-i}}{\gamma i},\qquad k\to\infty,
\end{equation*}
recovers the well known formula for the so-called Q-process, see  \cite[Ch I.14]{Athreya-Ney} and \cite{Sagitov-Lindo-2016}.


\begin{figure}
\centering
  \epsfig{file=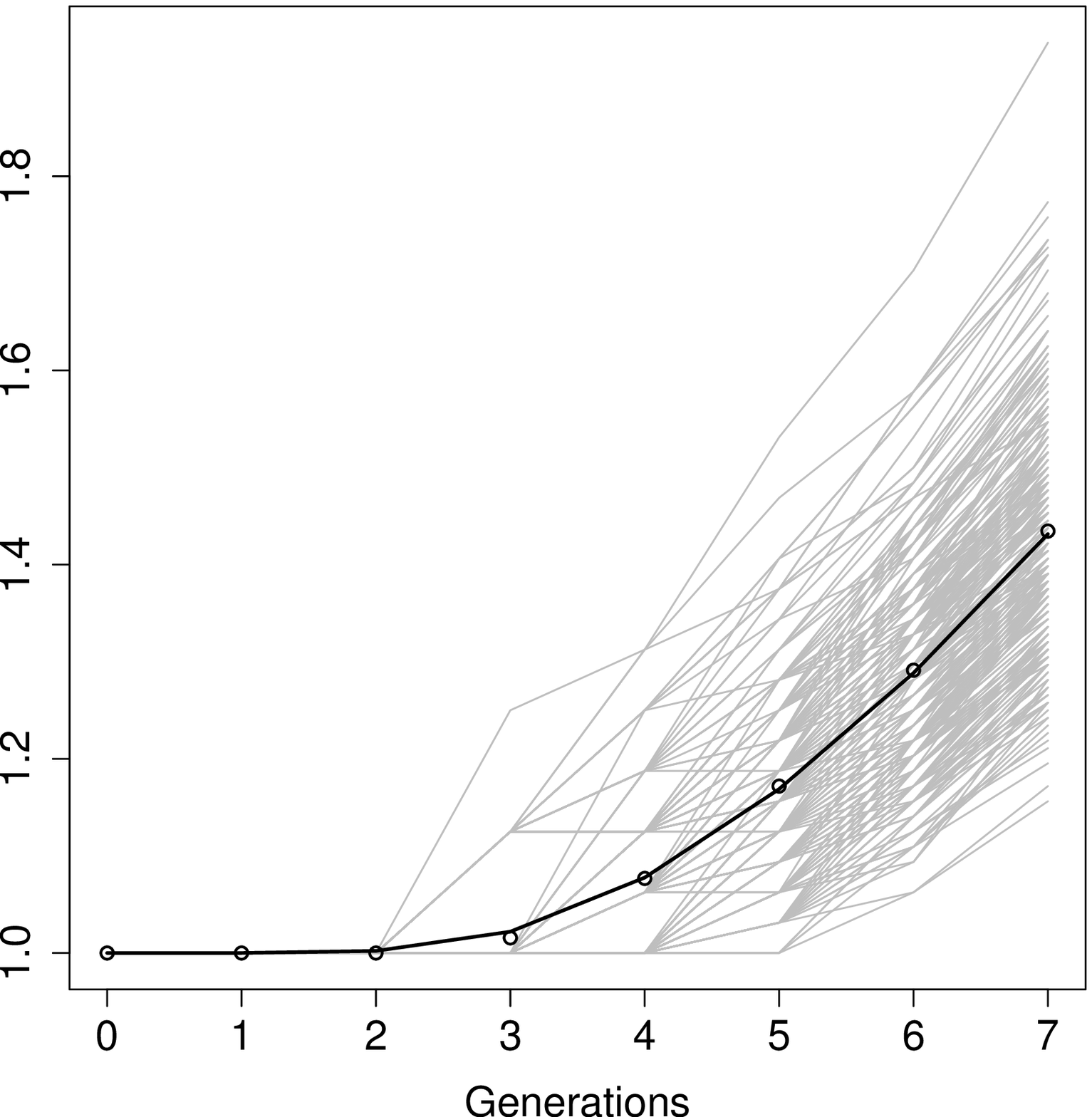, width=0.45\linewidth}\hspace{0.05\linewidth}
  \epsfig{file=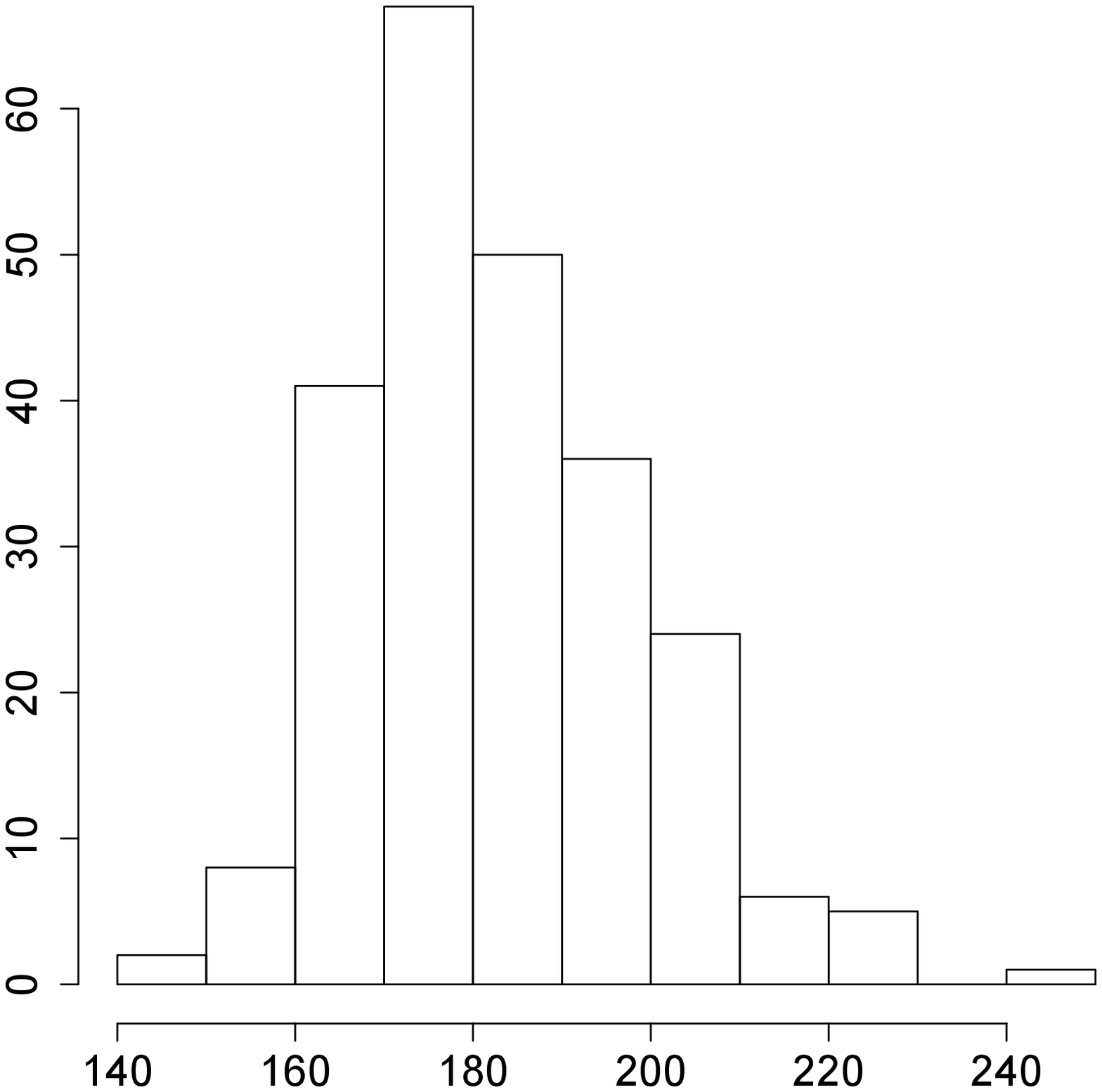, width=0.45\linewidth}
  \captionof{figure}{Simulation results for $f(s)=0.7 x^2+ 0.2 x^3$ and $t=7$.
 {\it Left panel.} Grey lines represent the vectors $(Z(0), 2^{-1}Z(1),\ldots , 2^{-t}Z(t))$ for 240 successful simulations having $T>t$. The thick black line shows the limit vector $(c(t),c(t-1),\ldots,c(0))$ suggested by Theorem \ref{LD}, which provides with a good approximation for the average trajectory (shown by circles)  even for the small observation time $t=7$.
 {\it  Right panel.} The histogram presents the observed values $Z(t)$ in the successful simulations. 
 }\label{f1}
\end{figure}

On the other hand, for $\gamma=0$, Proposition \ref{HR}-b gives a much faster decay of the tail distribution
\begin{align}\label{tail2}
 P(T>t)&
  \sim\pi_tR(1)^{l^t}=p_l^{-\frac{1}{l-1}} \rho^{l^t},\quad t\to\infty,
\end{align}
where $\rho=p_l^{\frac{1}{l-1}}R(1)\in(0,1)$. This yields
$P(T=t | T\geq t)\to 1$.
The next Theorem \ref{LD} establishes a conditional weak law of large numbers for ${l}^{t-k} Z(t-k)$. 
\begin{thm}\label{LD}
 Consider a defective GW-process with  $\gamma=0$. Then the asymptotic relation \eqref{tail2} holds and for the normalized process  $Y(t)={l}^{-t}Z(t)$, we have the following results concerning its expectation and variance.
\begin{enumerate}[label=(\alph*)]
\item If $f'(1)<\infty$, then uniformly over $0\le k\le t$,
\begin{align*}
E(Y(k)|T>t)-c(t-k)&\to 0,\quad t\to\infty,
\end{align*}
where in terms of $\bar R(s)=R'(s)/R(s)$,
\begin{equation}\label{ck}
 c(k)=1+f(k,1)\bar R(f(k,1)), \quad k=0,1,\ldots,
\end{equation}
is a strictly decreasing sequence with
\[ 1<\ldots<c(k+1)< c(k)<c(k-1)<\ldots<c(1)<c(0)<\infty.
\]
\item If $f''(1)<\infty$, then uniformly over $0\le k\le t$,
$$Var(Y(k)|T>t)\to0,\quad t\to\infty.$$
\end{enumerate}
\end{thm}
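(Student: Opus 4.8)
The plan is to reduce both parts to exact formulas for the conditional moments of $Z(k)$ and then estimate those formulas uniformly in $k$. Since $\gamma=0$ forces $l\ge2$, the minimal generation sizes $l^{j}$ are all $\ge2$, so $f(j,0)=0$ for every $j\ge1$, $q=0$, and in particular \eqref{tail2} is just Proposition~\ref{HR}-b evaluated at $s=1$. The starting point I would establish is the identity
\[
E\bigl(s^{Z(k)};T>t\bigr)=f\bigl(k,\,s\,f(t-k,1)\bigr),\qquad 0\le k\le t,
\]
obtained by conditioning on $Z(k)$: given $Z(k)=i\ge1$, the $i$ lines of descent all avoid $\Delta$ during the remaining $m:=t-k$ steps (and reaching $0$ is impossible) with probability $f(m,1)^{i}$, so the left side equals $\sum_{i\ge1}P(Z(k)=i)(s f(m,1))^{i}=f(k,s f(m,1))$. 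Writing $v_{n}:=f(n,1)$, differentiating at $s=1$, and using $P(T>t)=f(t,1)=f(k,v_{m})$, one gets
\[
E\bigl(Y(k)\mid T>t\bigr)=l^{-k}\,\frac{v_{m}\,f'(k,v_{m})}{f(k,v_{m})}=:\Phi_{k}(v_{m}),\qquad m=t-k,
\]
where $f'(k,\cdot)$ denotes the $s$-derivative of $f(k,\cdot)$.

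Next I would exploit the product structure behind Proposition~\ref{HR}-b. Iterating $f(s)=p_{l}s^{l}b(s)$ gives the exact factorisation $f(k,s)=\pi_{k}\,(s R_{k}(s))^{l^{k}}$ with $R_{k}(s):=\prod_{j=0}^{k-1}b(f(j,s))^{l^{-j-1}}\uparrow R(s)$, and taking $\partial_{s}\log$ of this collapses the ratio above to the transparent form $\Phi_{k}(s)=1+s\bar R_{k}(s)$, $\bar R_{k}:=R_{k}'/R_{k}$. Since $c(m)=1+v_{m}\bar R(v_{m})$ by \eqref{ck} and $f(j,v_{m})=v_{m+j}$, part~(a) reduces to showing
\[
\bigl|E(Y(k)\mid T>t)-c(t-k)\bigr|=v_{m}\bigl(\bar R(v_{m})-\bar R_{k}(v_{m})\bigr)=v_{m}\sum_{j\ge k}l^{-j-1}\,\bar b(v_{m+j})\,f'(j,v_{m})\longrightarrow0
\]
uniformly over $k+m=t\to\infty$, where $\bar b=b'/b\ge0$. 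Here $f'(1)<\infty$ keeps $\bar b$ bounded on $[0,1]$, while $f'(j,v_{m})=\prod_{i=0}^{j-1}f'(v_{m+i})$ has every factor $\le f'(1)$ and $\le\tfrac12$ once its argument has dropped below a fixed level, which happens after a number of steps bounded independently of $m$ because $v_{n}\downarrow0$. I would then split the range: if $m$ is large, $v_{m}$ is negligibly small while $v_{m+i}\le v_{m}$ makes the whole series $O(1)$; if $m$ stays bounded then $k=t-m\to\infty$, and $\sum_{j\ge k}$ of the (absolutely convergent, for each of the finitely many $m$) series defining $\bar R(v_{m})$ tends to $0$. The monotonicity of $c(k)$ I would get from the functional equation $R(s)^{l}=b(s)R(f(s))$, immediate from the product; differentiating its logarithm gives $\bar R(f(s))f'(s)=l\bar R(s)-\bar b(s)$, and substituting $s=v_{k}$ with $v_{k+1}=f(v_{k})$ yields
\[
c(k)-c(k+1)=\bar R(v_{k})\,\frac{v_{k}f'(v_{k})-l\,f(v_{k})}{f'(v_{k})}+\frac{f(v_{k})}{f'(v_{k})}\,\bar b(v_{k}),
\]
which is $\ge0$ because $\bar R\ge0$, $\bar b\ge0$, and $sf'(s)-lf(s)=\sum_{j>l}(j-l)p_{j}s^{j}\ge0$, and is strict unless $f(s)\equiv p_{l}s^{l}$; together with $\bar R(v_{k})>0$ and $c(0)=1+\bar R(1)$ with $\bar R(1)=R'(1)/R(1)<\infty$ this gives $1<\dots<c(k+1)<c(k)<\dots<c(0)<\infty$.

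For part~(b) I would push the same computation to the second derivative: $E\bigl(Z(k)(Z(k)-1);T>t\bigr)=v_{m}^{2}f''(k,v_{m})$, and combining with $\partial_{s}^{2}\log f(k,s)=-l^{k}/s^{2}+l^{k}\bar R_{k}'(s)$ collapses the conditional variance to the exact formula
\[
\mathrm{Var}\bigl(Y(k)\mid T>t\bigr)=l^{-k}\bigl(v_{m}\,\bar R_{k}(v_{m})+v_{m}^{2}\,\bar R_{k}'(v_{m})\bigr),\qquad m=t-k.
\]
It then suffices to show the bracket is bounded uniformly in $k,m$ and $\to0$ as $v_{m}\to0$, after which the usual split ($k$ large, so $l^{-k}$ is small; $k$ bounded, so $m\to\infty$ and $v_{m}\to0$) finishes; here $f''(1)<\infty$ is used to bound $\bar b'$ and the differentiated series, and although $\bar R_{k}'(v_{m})$ may grow like $1/v_{m}$, the prefactor $v_{m}^{2}$ absorbs it.

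I expect the main obstacle to be the \emph{uniformity} of these tail estimates — controlling the products $f'(j,v_{m})$, and in part (b) also $f''(j,v_{m})$, simultaneously for all base points $v_{m}\in(0,1]$ as $k$ ranges over $[0,t]$. The crucial inputs for this are the doubly-exponential decay $v_{n}\asymp p_{l}^{-1/(l-1)}\rho^{\,l^{n}}$ implicit in \eqref{tail2}, the elementary bound $f'(s)\le C s^{\,l-1}$ on $[0,1]$, and the observation that every $v_{m+i}$ is already small after a number of iterations that does not depend on $m$.
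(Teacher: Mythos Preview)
Your proposal is correct and follows essentially the same route as the paper: derive the conditional generating function identity (the paper's \eqref{Tank2}, which in the case $q=0$ reduces to your $E(s^{Z(k)};T>t)=f(k,sf(t-k,1))$), use the factorisation $f(k,s)=\pi_k(sR_k(s))^{l^k}$ to reduce the conditional first and second moments to the exact formulas $E(Y(k)\mid T>t)=1+v_m\bar R_k(v_m)$ and $\mathrm{Var}(Y(k)\mid T>t)=l^{-k}\bigl(v_m\bar R_k(v_m)+v_m^2\bar R_k'(v_m)\bigr)$, and finish with a small/large-$k$ split. The paper packages the uniformity for part~(a) via the single inequality $f(t-k,1)\delta_k\le f(t-k',1)\delta_0+\delta_{k'}$ with $\delta_k=\sum_{j\ge k}\gamma_0\cdots\gamma_{j-1}$, which is exactly your split argument written compactly; your bound $f'(j,v_m)\le\prod_i f'(v_i)$ recovers the same $\delta_k$. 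For the monotonicity of $c(k)$ the paper rewrites $1+f(s)\bar R(f(s))=\dfrac{(\ln p_ls^l)'}{(\ln f(s))'}\,(1+s\bar R(s))$ and argues the ratio is $<1$, which is algebraically equivalent to your positivity computation for $c(k)-c(k+1)$. One small remark: you hedge that $\bar R_k'(v_m)$ ``may grow like $1/v_m$'', but in fact under $f''(1)<\infty$ the paper shows $\bar R_k'$ is uniformly bounded on $[0,1]$ (via $\sum_j f''(j,1)<\infty$), which gives directly $\mathrm{Var}(Y(k)\mid T>t)\le c\,l^{-k}v_m$ and makes the split cleaner.
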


According to Theorem \ref{LD}-b, if $f''(1)<\infty$, then conditionally on $T>t$, we have convergence in probability $Y(t-k)\to c(k)$ as $k\ge0$ is fixed and $t\to\infty$, and convergence in probability $Y(k)\to 1$ as $t-k\to\infty$. This indicates that
being conditioned on survival, the reproduction regime prefers the minimal offspring number ${l}$, especially at early times (see Figure  \ref{f1}).


\section{Extendable defective GW-processes}\label{Sext}

Suppose $f(r)=r$ for some $r>1$, so that necessarily $f(1)<1$ (see Figure  \ref{f2}). In this case the corresponding defective GW-process $Z(\cdot)$ could be called an extendable GW process because the usual range $0\le s\le 1$ for the reproduction generating function $f(s)$ can be extended to $0\le s\le r$. The transformed function
\[\hat f(s)=r^{-1}f(rs),\quad s\in[0,1],\quad \hat f(1)=1,\]
generates a proper reproduction distribution $\hat p_k=r^{k-1}p_k$ with mean $\hat m=\hat f'(1)=f'(r)$. Denote by $\hat Z(\cdot)$  the GW-process with the reproduction law $\hat f(\cdot)$.
If  $\hat m\in(1,\infty)$, then by Theorem 3 in \cite[Ch I.10]{Athreya-Ney}, there exists a sequence $C(t)\to\infty$, $t\to\infty$ such that $\hat Z(t)/C(t)\to W$ a.s., where $P(W>0)=1-\hat q$ and $\hat q=q/r$. In this case,  for any given $\lambda\ge0$, we have a positive finite limit
\begin{equation}\label{psi}
E(e^{-\lambda \hat Z_n(t)/C(t)}|\hat T_0>t)\to \Psi(\lambda),\quad t\to\infty,
\end{equation}
where $\Psi(\lambda)=E(e^{-\lambda W}|W>0)$.
On the other hand, if $\hat m=\infty$, then by \cite{Darling},
\begin{equation}\label{phi}
P(b^{-t}\ln \hat Z(t)\le u|\hat T_0>t)\to \psi(u),\quad u\in(0,\infty),
\end{equation}
provided the following condition holds
\[g'(x)=ax^{b-1}(1+O(x^\delta)),\quad x\to0,\quad a>0,\quad b>1,\quad \delta>0.\]
Here  $g(\cdot)=G_{-1}(\cdot)$ is the  inverse function of $G(x)=1-f(1-x)$,
and the limit $\psi(\cdot)$ in \eqref{phi} is continuous and strictly monotonic increasing function such that
$$\psi(u)\to0,\quad u\to0+,\qquad \psi(u)\to1,\quad u\to\infty.$$

\begin{figure}
\centering
 \epsfig{file=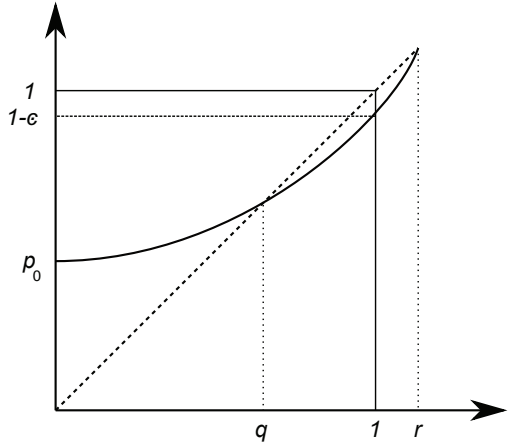, width=0.45\linewidth}
 \caption{Extendable generating function $f(\cdot)$.}\label{f2}
\end{figure}

\begin{thm}\label{extGW}
Let $\hat f(\cdot)$ be a probability generating function for a proper  reproduction law.
Consider a sequence of defective GW-processes  $Z_n(\cdot)$ corresponding to the sequence of  reproduction laws
\begin{equation}\label{An}
 f_n(s)=r_n\hat f(s/r_n),\quad r_n>1,\quad n\ge1.
\end{equation}
\begin{enumerate}[label=(\alph*)]
\item Suppose $\hat m\in(1,\infty)$ so that \eqref{psi} holds. If for some sequence $t_n\to\infty,$
\[(r_n-1)C(t_n)\to x\in(0,\infty),\]
then
\[P\left(T_n>t_n\right)\to (1-\hat q)\Psi\left(x\right),\]
and for each $\lambda\ge0$,
\begin{equation}\label{coco}
 E(e^{-\lambda Z_n(t_n)/C(t_n)}|T_n>t_n)\to \Psi(\lambda+x)/\Psi(x),\quad n\to\infty.
\end{equation}
\item Suppose $\hat m=\infty$ and \eqref{phi} holds. If for some sequence $t_n\to\infty,$
\[b^{-t_n}\ln (r_n-1)^{-1}\to y,\quad y\in(0,\infty),\quad n\to\infty,\]
then 
\[P\left(T_n>t_n\right)\to (1-\hat q)\psi(y),\]
and for $u\in[0,y]$,
\[P(b^{-t_n}\ln Z_n(t_n)\le u| T_n>t_n)\to \psi(u)/\psi(y),\quad n\to\infty.\]
\end{enumerate}
\end{thm}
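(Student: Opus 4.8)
The plan is to reduce the statement to the known limit theorems \eqref{psi}--\eqref{phi} for the proper process $\hat Z(\cdot)$ via the composition identity
\[
f_n(t,s)=r_n\,\hat f(t,s/r_n),\qquad t\ge0,\ s\in[0,1],
\]
which I would prove by a one-line induction on $t$ (it holds at $t=0$, and $f_n(t+1,s)=r_n\hat f\bigl(f_n(t,s)/r_n\bigr)=r_n\hat f\bigl(\hat f(t,s/r_n)\bigr)=r_n\hat f(t+1,s/r_n)$). Feeding this into the Introduction's identities $E(s^{Z(t)};T>t)=f(t,s)-f(t,0)$ and $P(Z(t)=0)=f(t,0)$ gives, for $s\in[0,1]$,
\[
E\bigl(s^{Z_n(t)};T_n>t\bigr)=r_n\,E\bigl((s/r_n)^{\hat Z(t)};\hat T_0>t\bigr),\qquad P(T_n>t)=r_n\,E\bigl(r_n^{-\hat Z(t)};\hat T_0>t\bigr),
\]
and, comparing coefficients, shows that conditionally on $\{T_n>t\}$ the law of $Z_n(t)$ is that of $\hat Z(t)$ conditioned on $\{\hat T_0>t\}$ and then exponentially tilted by the factor $r_n^{-k}$. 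The whole task is to follow what this tilting does as $t=t_n\to\infty$ and $r_n\downarrow1$ simultaneously.

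For part (a), I would set $G_t(\mu)=E(e^{-\mu\hat Z(t)/C(t)};\hat T_0>t)=\hat f(t,e^{-\mu/C(t)})-\hat f(t,0)$. For each fixed $\mu\ge0$, factoring out $P(\hat T_0>t)\to1-\hat q$ and invoking \eqref{psi} gives $G_t(\mu)\to(1-\hat q)\Psi(\mu)$, where $\Psi$ is continuous and $\Psi(\mu)>0$. Since $\mu\mapsto G_t(\mu)$ is nonincreasing, a sandwich argument ($G_t(\mu+\delta)\le G_t(\mu_n)\le G_t(\mu-\delta)$ once $|\mu_n-\mu|<\delta$, then let $t\to\infty$, then $\delta\to0$) upgrades this to $G_{t_n}(\mu_n)\to(1-\hat q)\Psi(\mu)$ for any $\mu_n\to\mu$. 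Writing $\lambda_n=C(t_n)\ln r_n$, so that $r_n^{-1}=e^{-\lambda_n/C(t_n)}$ and $\lambda_n\sim C(t_n)(r_n-1)\to x$, the first display gives $E(e^{-\lambda Z_n(t_n)/C(t_n)};T_n>t_n)=r_n\,G_{t_n}(\lambda+\lambda_n)\to(1-\hat q)\Psi(\lambda+x)$; taking $\lambda=0$ yields $P(T_n>t_n)\to(1-\hat q)\Psi(x)$, and dividing yields \eqref{coco}.

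For part (b), the heart of the matter is that the tilt $r_n^{-\hat Z(t_n)}$ degenerates, in the double limit, to the sharp cut-off $\mathbf{1}\{b^{-t_n}\ln\hat Z(t_n)<y\}$: indeed $\hat Z(t_n)$ lives on the scale $e^{b^{t_n}u}$ by \eqref{phi}, whereas, writing $\ln r_n=e^{-\beta_n b^{t_n}}$, the hypothesis together with $\ln r_n\sim r_n-1$ gives $\beta_n\to y$. Concretely, I would show that on $\{\hat T_0>t_n,\ b^{-t_n}\ln\hat Z(t_n)\le u\}$ with $u<y$ one has $\ln r_n\,\hat Z(t_n)\le e^{(u-\beta_n)b^{t_n}}\to0$, hence $1-r_n^{-\hat Z(t_n)}\to0$ uniformly there, while on $\{\hat T_0>t_n,\ b^{-t_n}\ln\hat Z(t_n)>v\}$ with $v>y$ one has $\ln r_n\,\hat Z(t_n)\ge e^{(v-\beta_n)b^{t_n}}\to\infty$, hence $r_n^{-\hat Z(t_n)}\to0$ uniformly there. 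Splitting the state space at the thresholds $y\pm\eta$ and combining these uniform bounds with \eqref{phi}, $P(\hat T_0>t_n)\to1-\hat q$, and the continuity of $\psi$, I obtain for every $u\in[0,y]$ (the endpoint $u=y$ by squeezing between $u=y-\eta$ and the total mass)
\[
E\bigl(r_n^{-\hat Z(t_n)}\,\mathbf{1}\{b^{-t_n}\ln\hat Z(t_n)\le u\};\,\hat T_0>t_n\bigr)\ \longrightarrow\ (1-\hat q)\,\psi(u).
\]
Taking $u=y$ gives $P(T_n>t_n)=r_nE(r_n^{-\hat Z(t_n)};\hat T_0>t_n)\to(1-\hat q)\psi(y)$, and since conditionally on $\{T_n>t_n\}$ the law of $Z_n(t_n)$ is the $r_n^{-k}$-tilt of $\hat Z(t_n)$ conditioned on $\{\hat T_0>t_n\}$, dividing the displayed limit for general $u$ by its value at $u=y$ gives $P(b^{-t_n}\ln Z_n(t_n)\le u\mid T_n>t_n)\to\psi(u)/\psi(y)$.

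The main obstacle is precisely this two–time–scale estimate in part (b): making rigorous that the exponential tilt collapses to a $0/1$ cut-off at level $y$, which must be carried out jointly with the weak convergence \eqref{phi}. In part (a) the corresponding double limit is harmless, being handled by the monotonicity of the Laplace transform $G_t$ in its argument; everything else in both parts is the composition identity plus routine continuity and monotonicity bookkeeping.
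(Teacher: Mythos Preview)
Your proof is correct and follows essentially the same route as the paper: both hinge on the composition identity $f_n(t,s)=r_n\hat f(t,s/r_n)$ and reduce everything to the limits \eqref{psi}--\eqref{phi} for $\hat Z$, with your monotonicity sandwich in (a) making explicit the step the paper writes as an $o(1)$ perturbation in the exponent. In (b) the paper stays in Laplace-transform language (first proving $\hat f(t,e^{-e^{-(u+o(1))b^t}})\to\hat q+(1-\hat q)\psi(u)$ and then substituting via the composition identity), whereas you phrase the same two-scale computation probabilistically as the tilt $r_n^{-\hat Z(t_n)}$ degenerating to the indicator of $\{b^{-t_n}\ln\hat Z(t_n)<y\}$; the two arguments are equivalent.
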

Theorem \ref{extGW}-a should be compared to  \cite[Theorem 3.4]{Pakes-1984} concerning a sequence of GW-processes with killing:   if $Z_n(\cdot)$ has a reproduction law of the form $f_n(s)=\hat f(\alpha_n s)$, where 
$\hat f(1)=1$, $\hat f'(1)\in(1,\infty)$,  and
$$ (1-\alpha_n)C(t_n)\to (\hat m-1)x/\hat m,\quad n\to\infty,$$
then the same weak convergence result \eqref{coco} holds. The proof of Theorem \ref{extGW} given in Section \ref{pr2} is more straightforward than the proof of  \cite[Theorem 3.4]{Pakes-1984}, which demonstrates  the advantage of dealing with the extendable GW-processes.

\section{Explicit limits for defective theta-branching processes}\label{theta-branching} 

The main assumption of Section \ref{Sext}  is quite restrictive on the mode of convergence $f_n(\cdot)\to\hat f(\cdot)$, namely, condition \eqref{An} requires that the sequence $f_n(\cdot)$ has a common shape of the reproduction laws and only a scale parameter $r_n\to1$ is changing as $n\to\infty$. In this section we take a step towards a more general setting for the convergence $f_n(\cdot)\to\hat f(\cdot)$. We focus on the parametric family of the theta-branching processes introduced in \cite{Sagitov-Lindo-2016}. Our Propositions \ref{peps}, \ref{peps2} and \ref{peps3} give  explicit expressions for the corresponding limit distributions.

Proposition \ref{peps} is a counterpart of Theorem \ref{extGW}-a in terms of a sequence of  extendable GW-processes whose generating functions are explicitly characterized by four parameters
$$(\theta_n, q_n,\gamma_n,r_n)\in(0,1]\times[0,1)\times(0,1)\times(1,\infty)$$
as follows
\begin{align*}
f_n(t,s)=
r_n-\left[\gamma_n^t (r_n-s)^{-\theta_n}+(1-\gamma_n^t)(r_n-q_n)^{-\theta_n}\right]^{-1/\theta_n}, \quad  s\in [0,r_n],
\end{align*}
In agreement with our previous notation, $q_n$ is the extinction probability and  $\gamma_n=f_n'(q_n)$. These are defective GW-processes with the defect value
\[\eps_n=\left[\gamma_n(r_n-1)^{-\theta_n}+(1-\gamma_n)(r_n-q_n)^{-\theta_n}\right]^{-1/\theta_n}-(r_n-1).\]

\begin{prop}  \label{peps}
Fix a triplet $(\theta,q,\gamma)\in(0,1]\times[0,1)\times(0,1)$ and consider an above described  sequence of defective theta-branching processes $Z_n(\cdot)$ with
$$(\theta_n,\gamma_n,q_n,r_n)\to(\theta,\gamma,q,1),\quad n\to\infty.$$
Denote $m_n=f'_n(1)=\gamma_n^{-1/\theta_n}$ and assume that for some $t_n\to\infty$,
\begin{equation}\label{aseps}
 (r_n-1)m_n^{t_n}\to x\in(0,\infty),\quad n\to\infty.
\end{equation}
\begin{enumerate}[label=(\alph*)]
\item As $n\to\infty$,
\[P\left(T_n>t_n\right)\to (1-q)\Psi\left(x\right),\]
where
\begin{equation}\label{psih}
 \Psi(\lambda)=1-\left[1+(1-q)^{\theta}\lambda ^{-\theta}\right]^{-1/\theta},\quad \lambda\ge0.
\end{equation}
\item If $k\ge0$ and $t_n-k\to\infty$,  then  for each $\lambda\ge0$,
\begin{align*}
 E\left(\exp\{-\lambda m_n^{k-t_n} Z_n(t_n-k)\}|T_n>t_n\right)\to {\Psi\left(x+\lambda\right)\over \Psi\left(x\right)},\quad n\to\infty.
\end{align*}
\end{enumerate}
\end{prop}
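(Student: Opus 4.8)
The overall plan is to push everything through the closed-form iterates $f_n(t,\cdot)$, using the identities $P(T_n>t_n)=f_n(t_n,1)-f_n(t_n,0)$ and $E\bigl(u^{Z_n(t)}\bigr)=f_n(t,u)$ from the introduction, and then take limits of purely algebraic expressions with the help of \eqref{aseps}, the relation $m_n=\gamma_n^{-1/\theta_n}$, and $(\theta_n,\gamma_n,q_n,r_n)\to(\theta,\gamma,q,1)$. Part (a) is then a direct substitution: from the closed form,
\[P(T_n>t_n)=\bigl[\gamma_n^{t_n}r_n^{-\theta_n}+(1-\gamma_n^{t_n})(r_n-q_n)^{-\theta_n}\bigr]^{-1/\theta_n}-\bigl[\gamma_n^{t_n}(r_n-1)^{-\theta_n}+(1-\gamma_n^{t_n})(r_n-q_n)^{-\theta_n}\bigr]^{-1/\theta_n},\]
and since $\gamma_n^{t_n}=m_n^{-\theta_n t_n}$ we get $\gamma_n^{t_n}(r_n-1)^{-\theta_n}=\bigl(m_n^{t_n}(r_n-1)\bigr)^{-\theta_n}\to x^{-\theta}$ by \eqref{aseps}; as $r_n\to1$ this forces $m_n^{t_n}\to\infty$, so $\gamma_n^{t_n}r_n^{-\theta_n}\to0$, while $(1-\gamma_n^{t_n})(r_n-q_n)^{-\theta_n}\to(1-q)^{-\theta}$. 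Hence $P(T_n>t_n)\to(1-q)-\bigl[x^{-\theta}+(1-q)^{-\theta}\bigr]^{-1/\theta}$, and this equals $(1-q)\Psi(x)$ by \eqref{psih}, using the identity $(1-q)\bigl[1+(1-q)^{\theta}x^{-\theta}\bigr]^{-1/\theta}=\bigl[(1-q)^{-\theta}+x^{-\theta}\bigr]^{-1/\theta}$.

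For part (b) the first step is to rewrite the conditional Laplace transform as a difference of iterates at two shifted arguments. Put $\sigma_n=e^{-\lambda m_n^{k-t_n}}$. Conditioning on the value of $Z_n(t_n-k)$ and using the branching property, for $j\ge1$ we have $P\bigl(T_n>t_n\mid Z_n(t_n-k)=j\bigr)=f_n(k,1)^j-f_n(k,0)^j$, because a sum of $j$ independent $k$-step subtrees lands in $\{1,2,\ldots\}$ exactly when none of them reaches $\Delta$ and not all of them vanish, the latter event being contained in the former. Multiplying by $\sigma_n^j\,P(Z_n(t_n-k)=j)$, summing over $j\ge1$, and using $\sum_{j\ge0}u^j P(Z_n(t_n-k)=j)=E\bigl(u^{Z_n(t_n-k)}\bigr)=f_n(t_n-k,u)$ gives
\[E\bigl(\sigma_n^{Z_n(t_n-k)};\,T_n>t_n\bigr)=f_n\bigl(t_n-k,\sigma_n f_n(k,1)\bigr)-f_n\bigl(t_n-k,\sigma_n f_n(k,0)\bigr),\]
and $P(T_n>t_n)$ is the same expression with $\sigma_n$ replaced by $1$; the sought limit is the quotient.

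It remains to find the limits of $f_n\bigl(t_n-k,\sigma_n f_n(k,u)\bigr)$, $u\in\{0,1\}$. From $t_n-k\to\infty$ one gets $m_n^{k-t_n}\to0$, hence $\sigma_n\to1$ and $m_n^k(r_n-1)=m_n^{k-t_n}\cdot m_n^{t_n}(r_n-1)\to0$; also $\gamma_n^{t_n-k}=m_n^{-\theta_n(t_n-k)}\to0$. For $u=0$: since $f_n(k,0)=P(Z_n(k)=0)\le q_n$ we have $r_n-\sigma_n f_n(k,0)\ge r_n-q_n\to1-q>0$, so $\bigl(r_n-\sigma_n f_n(k,0)\bigr)^{-\theta_n}$ is bounded, and the closed form gives $f_n\bigl(t_n-k,\sigma_n f_n(k,0)\bigr)\to1-\bigl[(1-q)^{-\theta}\bigr]^{-1/\theta}=q$. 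The crux is $u=1$, namely $\gamma_n^{t_n-k}\bigl(r_n-\sigma_n f_n(k,1)\bigr)^{-\theta_n}\to(x+\lambda)^{-\theta}$: the closed form gives $\bigl(r_n-f_n(k,1)\bigr)^{-\theta_n}=\gamma_n^k(r_n-1)^{-\theta_n}+(1-\gamma_n^k)(r_n-q_n)^{-\theta_n}$ with the first term $=\bigl(m_n^k(r_n-1)\bigr)^{-\theta_n}\to\infty$ dominating, so $r_n-f_n(k,1)\sim m_n^k(r_n-1)$ and $f_n(k,1)\to1$; moreover $1-\sigma_n\sim\lambda m_n^{k-t_n}\sim\lambda m_n^k(r_n-1)/x$; therefore $r_n-\sigma_n f_n(k,1)=\bigl(r_n-f_n(k,1)\bigr)+f_n(k,1)(1-\sigma_n)\sim m_n^k(r_n-1)(1+\lambda/x)$, whence, using the exact identity $\gamma_n^{t_n-k}m_n^{-k\theta_n}=\gamma_n^{t_n}$,
\[\gamma_n^{t_n-k}\bigl(r_n-\sigma_n f_n(k,1)\bigr)^{-\theta_n}\sim\gamma_n^{t_n}(r_n-1)^{-\theta_n}(1+\lambda/x)^{-\theta_n}\to x^{-\theta}(1+\lambda/x)^{-\theta}=(x+\lambda)^{-\theta}.\]
Combined with $(1-\gamma_n^{t_n-k})(r_n-q_n)^{-\theta_n}\to(1-q)^{-\theta}$ this yields $f_n\bigl(t_n-k,\sigma_n f_n(k,1)\bigr)\to1-\bigl[(x+\lambda)^{-\theta}+(1-q)^{-\theta}\bigr]^{-1/\theta}$, so the numerator tends to $(1-q)\Psi(x+\lambda)$ (the same algebraic identity as in (a)), and since the denominator limit $(1-q)\Psi(x)$ is positive, the quotient tends to $\Psi(x+\lambda)/\Psi(x)$.

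The one genuinely delicate point is that $\sigma_n\to1$ must not be used to discard $1-\sigma_n$ prematurely: inside $r_n-\sigma_n f_n(k,1)$ the term $f_n(k,1)(1-\sigma_n)$ is of the same order $r_n-1$ as $r_n-f_n(k,1)$, and it is exactly this contribution that upgrades $x^{-\theta}$ to $(x+\lambda)^{-\theta}$. The exact identity $m_n^{-k\theta_n}=\gamma_n^k$ is what makes the apparent dependence on $k$ cancel; everything else is routine bookkeeping with the closed-form iterates and continuity of the algebraic map at the relevant positive arguments.
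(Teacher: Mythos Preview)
Your proof is correct and follows essentially the same route as the paper. Both arguments plug into the closed-form iterates $f_n(t,\cdot)$, use the identity $P(T_n>t_n)=f_n(t_n,1)-f_n(t_n,0)$ together with the conditioning formula (which is \eqref{Tank2}/\eqref{Tank3} in the paper, re-derived by you), and then pass to the limit via the key observation $\gamma_n^{t_n}(r_n-1)^{-\theta_n}\to x^{-\theta}$ and the asymptotic $r_n-\sigma_n f_n(k,1)\sim m_n^{k}(r_n-1)(1+\lambda/x)$; your treatment of the $u=0$ term via the crude bound $f_n(k,0)\le q_n$ is a minor shortcut compared to the paper's explicit limit for $f_n(k,0)$, but it suffices.
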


Under the conditions of Proposition  \ref{peps} we have $f_n(s)\to\hat f(s)$, where
\begin{align}\label{gs}
\hat f(s)=1-\left[\gamma (1-s)^{-\theta}+(1-\gamma)(1-q)^{-\theta}\right]^{-1/\theta}.
\end{align}
For the corresponding supercritical GW-process having the offspring mean $\hat m=\gamma^{-1/\theta}$,
it is straightforward to check that the limit Laplace transform
\[E (e^{-\lambda \hat Z(t)\hat m^{-t}}|\hat T_0>t)=1-{1-\hat f(t,e^{-\lambda \gamma^{t/\theta}})\over 1-\hat f(t,0)}\to\Psi(\lambda),\quad t\to\infty,\]
is given by \eqref{psih}.
Since
\[\eps_n\sim(\gamma^{-1/\theta}-1)(r_n-1),\quad n\to\infty,\]
the first part of Proposition \ref{peps} essentially says that for a given small $\eps$, the absorption time $T$ of a defective theta-branching process with $\theta\in(0,1]$ is of order $\theta\log_\gamma \eps$. Observe that the new normalization $m_n^{t_n}$ may not be asymptotically equivalent to the normalization $\hat m^{t_n}$ suggested by  Theorem \ref{extGW}-a under an additional "xlogx" condition.

The next two propositions deal with two different sequences $f_n(\cdot)$ converging to the same limit reproduction law given by
\begin{align}\label{stem}
\hat f(s)=1-(1-q)^{1-\gamma}(1-s)^{\gamma}, \quad  s\in [0,1],
\end{align}
with $q\in[0,1)$, $\gamma\in(0,1)$, $\hat f(1)=1$, and $\hat m=\hat f'(1)=\infty$. Plugging $s=\exp\{-\lambda e^{-u\gamma^{-t}}\}$ into
\[\hat f(t,s)=1-(1-q)^{1-\gamma^t}(1-s)^{\gamma^t},\]
it is straightforward to find a convergence
\begin{align*}
 P\left(\gamma^{t}\ln \hat Z(t)\le u|\hat T_0>t\right)\to 1-e^{-u},\quad u\ge 0
\end{align*}
to a standard exponential distribution.

\begin{prop}  \label{peps2}
Consider a sequence of defective GW-processes $Z_n(\cdot)$ having the following reproduction laws
\begin{align*}
f_n(s)=
r_n-(r_n-q_n)^{1-\gamma_n}(r_n-s)^{\gamma_n},\quad s\in[0,r_n),
\end{align*}
with $(q_n,\gamma_n,r_n)\in[0,1)\times(0,1)\times(1,\infty)$. Suppose that for some $(q,\gamma)\in[0,1)\times(0,1)$,
$$(q_n,\gamma_n,r_n)\to(q,\gamma,1)\quad n\to\infty,$$
and that for some $t_n\to\infty$,
\begin{equation}\label{lug}
 \gamma_n^{t_n}\ln(r_n-1)^{-1}\to  y\in(0,\infty),\quad n\to\infty.
\end{equation}
\begin{enumerate}[label=(\alph*)]
\item As $n\to\infty$,
\begin{equation}\label{lum}
P\left(T_n>t_n\right)\to (1-q)(1-e^{-y}).
\end{equation}
\item If $k\ge0$ and $t_n-k\to\infty$, then
\begin{align}\label{same}
 P\left(\gamma_n^{t_n-k}\ln Z_n(t_n-k)\le u|T_n>t_n\right)\to {1-e^{-u}\over 1-e^{-y}},\quad 0\le u\le y.
\end{align}
\end{enumerate}
\end{prop}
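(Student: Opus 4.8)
Here is a proof plan.

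The plan is to reduce everything to the explicit iterates of $f_n$ and then to keep track of a few competing exponentially small quantities. By induction on $t$, starting from the identity $r_n-f_n(s)=(r_n-q_n)^{1-\gamma_n}(r_n-s)^{\gamma_n}$, one first records
\[
f_n(t,s)=r_n-(r_n-q_n)^{1-\gamma_n^{t}}(r_n-s)^{\gamma_n^{t}},\qquad s\in[0,r_n),\ t\ge0,
\]
the defective, $r_n$-shifted analogue of the iterates of $\hat f$; along the way one checks $f_n(q_n)=q_n$, $f_n'(q_n)=\gamma_n$ and $f_n(1)<1$. Part (a) is then immediate: since $r_n\to 1^{+}$, we have $\ln(r_n-1)^{-1}\to\infty$, so \eqref{lug} forces $\gamma_n^{t_n}\to0$, and hence in
\[
P(T_n>t_n)=f_n(t_n,1)-f_n(t_n,0)=(r_n-q_n)^{1-\gamma_n^{t_n}}[r_n^{\gamma_n^{t_n}}-(r_n-1)^{\gamma_n^{t_n}}]
\]
we have $(r_n-q_n)^{1-\gamma_n^{t_n}}\to1-q$, $\;r_n^{\gamma_n^{t_n}}=\exp\{\gamma_n^{t_n}\ln r_n\}\to1$ and $(r_n-1)^{\gamma_n^{t_n}}=\exp\{-\gamma_n^{t_n}\ln(r_n-1)^{-1}\}\to e^{-y}$, which is \eqref{lum}.

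For part (b) put $m=t_n-k$. Conditioning on $Z_n(m)$ and using the branching property together with the graveyard rules, a subpopulation started from $j\ge1$ particles avoids absorption during the next $k$ steps precisely when none of its $j$ independent subtrees reaches $\Delta$ within $k$ steps and not all of them reach $0$ within $k$ steps; by inclusion--exclusion this has probability $f_n(k,1)^{j}-f_n(k,0)^{j}$. Multiplying by $s^{j}$, summing over $j\ge1$ and using $\sum_{j\ge1}P(Z_n(m)=j)\sigma^{j}=f_n(m,\sigma)-f_n(m,0)$ (which follows from $E(\sigma^{Z_n(m)};T_\Delta>m)=f_n(m,\sigma)$ and $P(Z_n(m)=0)=f_n(m,0)$), one obtains for $s\in[0,1]$
\begin{align*}
E(s^{Z_n(m)};T_n>t_n)&=f_n(m,sf_n(k,1))-f_n(m,sf_n(k,0))\\
&=(r_n-q_n)^{1-\gamma_n^{m}}[(r_n-sf_n(k,0))^{\gamma_n^{m}}-(r_n-sf_n(k,1))^{\gamma_n^{m}}],
\end{align*}
the case $k=0$ being included via $f_n(0,1)=1$, $f_n(0,0)=0$. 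Into this I would substitute $s=s_n:=\exp\{-\lambda e^{-u\gamma_n^{-m}}\}$ for fixed $\lambda>0$ and fixed $u\in(0,y)$; since $t_n-k\to\infty$ gives $\gamma_n^{-m}\to\infty$, we have $1-s_n\sim\lambda e^{-u\gamma_n^{-m}}$. The crux is the asymptotics of the two base factors. From $1-f_n(k,1)=(r_n-q_n)^{1-\gamma_n^{k}}(r_n-1)^{\gamma_n^{k}}-(r_n-1)$ and $\ln(r_n-1)^{-1}\sim y\gamma_n^{-t_n}$ one finds, on the logarithmic scale, $1-f_n(k,1)=\exp\{-y\gamma_n^{-m}(1+o(1))\}$ and $r_n-1=\exp\{-y\gamma_n^{-t_n}(1+o(1))\}$ with $\gamma_n^{-t_n}\ge\gamma_n^{-m}$, so for $u<y$ both are negligible next to $1-s_n$. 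Consequently $r_n-s_nf_n(k,1)=(r_n-1)+(1-f_n(k,1))+f_n(k,1)(1-s_n)\sim\lambda e^{-u\gamma_n^{-m}}$, whence $(r_n-s_nf_n(k,1))^{\gamma_n^{m}}\to e^{-u}$, whereas $r_n-s_nf_n(k,0)$ stays bounded away from $0$ and $\infty$, so $(r_n-s_nf_n(k,0))^{\gamma_n^{m}}\to1$. Therefore $E(s_n^{Z_n(m)};T_n>t_n)\to(1-q)(1-e^{-u})$, and dividing by $P(T_n>t_n)$ from part (a) gives $E(s_n^{Z_n(m)}\mid T_n>t_n)\to(1-e^{-u})/(1-e^{-y})$.

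It remains to convert this transform convergence into \eqref{same}. On $\{Z_n(m)\le e^{v\gamma_n^{-m}}\}$ one has $s_n^{Z_n(m)}\ge\exp\{-\lambda e^{(v-u)\gamma_n^{-m}}\}$, which tends to $1$ if $v<u$ and to $0$ if $v>u$; this sandwiches $P(\gamma_n^{m}\ln Z_n(m)\le v\mid T_n>t_n)$ between the limit above evaluated at $u$ slightly larger and slightly smaller than $v$, and since $u\mapsto(1-e^{-u})/(1-e^{-y})$ is continuous on $[0,y]$ the squeeze identifies the limit for every $v\in(0,y)$, the endpoints $v=0$ and $v=y$ following likewise. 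I expect the main obstacle to be precisely this multi-scale bookkeeping: the three quantities $r_n-1$, $1-f_n(k,1)$ and $1-s_n$ all tend to $0$ but at very different rates on the exponential scale in $\gamma_n^{-m}$, and one must verify that it is $1-s_n$ that governs the base $r_n-sf_n(k,1)$ exactly in the regime $u<y$, which is simultaneously what produces a limit law supported on $[0,y]$ and what forces the truncation at $y$. A minor point is making the branching decomposition watertight in the presence of the two absorbing states $0$ and $\Delta$.
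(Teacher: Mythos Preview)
Your proposal is correct and follows essentially the same route as the paper: explicit iterates $f_n(t,s)=r_n-(r_n-q_n)^{1-\gamma_n^{t}}(r_n-s)^{\gamma_n^{t}}$, then the identity $E(s^{Z_n(t_n-k)};T_n>t_n)=f_n(t_n-k,sf_n(k,1))-f_n(t_n-k,sf_n(k,0))$ (the paper records this earlier as a general formula), followed by the same three-term competition among $r_n-1$, $1-f_n(k,1)$ and $1-s_n$ on the exponential scale. The only cosmetic difference is that the paper scales with $\hat r_n=(r_n-1)^{uy^{-1}\gamma_n^{k}}$ rather than your $e^{-u\gamma_n^{-m}}$; since $\ln\hat r_n=-u\gamma_n^{-m}(1+o(1))$ these are interchangeable, and your explicit sandwich argument for passing from the transform limit to the distribution-function statement is a detail the paper leaves implicit.
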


Since in this parametric case the defect size has the asymptotic value
\[\eps_n\sim(1-q)^{1-\gamma}(r_n-1)^\gamma,\quad n\to\infty,\]
the first part of Proposition \ref{peps2} essentially says that for a given small defect value $\eps$, the absorption time of a defective theta-branching process with $\theta\in(0,1]$ is of order $\ln\ln \eps^{-1}$.
\begin{prop}  \label{peps3}
Consider a sequence of defective GW-processes $Z_n(t)$ having the following reproduction laws
\begin{align*}
f_n(s)=
A_n-\left[\gamma_n (A_n-s)^{|\theta_n|}+(1-\gamma_n)(A_n-q_n)^{|\theta_n|}\right]^{1/|\theta_n|}, \quad s\in [0,A_n],
\end{align*}
where $( \theta_n,q_n,\gamma_n,A_n)\in(-1,0)\times[0,1)\times(0,1)\times [1,\infty)$. Suppose that for some $(\gamma,q)\in(0,1)\times[0,1)$,
$$(\theta_n,\gamma_n,q_n,A_n)\to(0,\gamma,q,1), \quad n\to\infty,$$
in such a way that  for some $t_n\to\infty$,
\begin{align}
|\theta_n|\ln (A_n-1)^{-1}&\to a\in (0,\infty],\label{eq:cond-peps3}\\
 \gamma_n^{t_n}|\theta_n|^{-1}&\to  y\in(0,\infty),\quad n\to\infty.\label{luga}
\end{align}
\begin{enumerate}[label=(\alph*)]
\item As $n\to\infty$, 
\begin{equation*}
P\left(T_n>t_n\right)\to (1-q)(1-e^{-y(1-e^{-a})}).
\end{equation*}
\item [(b$_1$)] If $k\ge0$ is fixed , then putting $\hat u(x)={-x}\ln(1-u/x)$,
\begin{equation*}
P\left(\gamma_n^{t_n-k}\ln Z_n(t_n-k)\le \hat u(y\gamma^{-k})|T_n>t_n\right)\to {1-e^{-u}\over 1-e^{-y(1-e^{-a})}},\quad 0\le u < y(1-e^{-a}).
\end{equation*}
\item [(b$_2$)] If $k\to\infty$,  $t_n-k\to\infty$, then
\begin{align*}
 P\left(\gamma_n^{t_n-k}\ln Z_n(t_n-k)\le u|T_n>t_n\right)\to {1-e^{-u}\over 1-e^{-y(1-e^{-a})}},\quad 0\le u < y(1-e^{-a}).
\end{align*}
\end{enumerate}
\end{prop}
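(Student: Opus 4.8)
The plan is to exploit the explicit iteration formula for this branch of the theta-family, namely
\[
A_n-f_n(t,s)=\bigl[\gamma_n^{\,t}(A_n-s)^{|\theta_n|}+(1-\gamma_n^{\,t})(A_n-q_n)^{|\theta_n|}\bigr]^{1/|\theta_n|},
\]
which, exactly as for the family in \cite{Sagitov-Lindo-2016}, is obtained by conjugating the one-step map $f_n$ with $s\mapsto(A_n-s)^{|\theta_n|}$, under which $f_n$ becomes the affine contraction $x\mapsto\gamma_n x+(1-\gamma_n)(A_n-q_n)^{|\theta_n|}$ with fixed point $(A_n-q_n)^{|\theta_n|}$. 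I will use repeatedly the elementary limits $(A_n-1)^{|\theta_n|}\to e^{-a}$ (from \eqref{eq:cond-peps3}, with the convention $e^{-\infty}=0$), $A_n^{|\theta_n|}\to1$, $(A_n-q_n)^{|\theta_n|}\to1$, together with $\gamma_n^{\,t_n}\sim y|\theta_n|\to0$ and $\gamma_n^{\,t_n}|\theta_n|^{-1}\to y$ from \eqref{luga}, and the usual device (bounded convergence plus a Helly selection) that, for $\beta_n\to\infty$, translates convergence of transforms $E(\exp\{-\xi_n e^{-v\beta_n}\}\mid\cdot)$ into convergence of the distribution function of $\beta_n^{-1}\ln\xi_n$, provided the limit is a continuous distribution function of $v$.

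For part (a) I would write $P(T_n>t_n)=f_n(t_n,1)-f_n(t_n,0)=[A_n-f_n(t_n,0)]-[A_n-f_n(t_n,1)]$, insert the iteration formula, and in each square bracket factor out $(A_n-q_n)^{|\theta_n|}$, turning it into $(A_n-q_n)\,[1+\gamma_n^{\,t_n}(\rho_n-1)]^{1/|\theta_n|}$ with $\rho_n=((A_n-z)/(A_n-q_n))^{|\theta_n|}$, $z\in\{0,1\}$. Taking the logarithm and using $|\theta_n|^{-1}\ln(1+\gamma_n^{\,t_n}(\rho_n-1))\sim\gamma_n^{\,t_n}|\theta_n|^{-1}(\rho_n-1)$ gives $A_n-f_n(t_n,0)\to1-q$ (here $\rho_n-1=O(|\theta_n|)$, so the contribution vanishes) and $A_n-f_n(t_n,1)\to(1-q)e^{-y(1-e^{-a})}$ (here $\rho_n-1\to e^{-a}-1$); subtracting yields the claimed limit.

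For part (b), set $\tau_n=t_n-k$, $\mu_n=f_n(k,1)$, $\nu_n=f_n(k,0)$. Decomposing the population at time $\tau_n$ into independent subfamilies (using $s^\Delta=0$, $\Delta+x=\Delta$) one gets $E(s^{Z_n(t_n)};T_n>t_n\mid Z_n(\tau_n)=m)=f_n(k,s)^m-f_n(k,0)^m$ for $m\ge1$, hence $P(T_n>t_n\mid\mathcal F_{\tau_n})=\mu_n^{Z_n(\tau_n)}-\nu_n^{Z_n(\tau_n)}$ and therefore
\[
E\bigl(s^{Z_n(\tau_n)};T_n>t_n\bigr)=f_n(\tau_n,s\mu_n)-f_n(\tau_n,s\nu_n)=[A_n-f_n(\tau_n,s\nu_n)]-[A_n-f_n(\tau_n,s\mu_n)].
\]
I would then substitute $s=s_n:=\exp\{-e^{-v\gamma_n^{-\tau_n}}\}$, apply the iteration formula on the right, and pass to the limit. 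Since $\nu_n\to1-(1-q)^{1-\gamma^k}$ one has $A_n-s_n\nu_n\to(1-q)^{1-\gamma^k}>0$, so the $\nu$-bracket contributes $1-q$ just as in (a); but $\mu_n\to1$, so $A_n-s_n\mu_n=(A_n-\mu_n)+\mu_n(1-s_n)$ is a sum of two quantities each of the form $\exp\{-c/|\theta_n|\}$ — with $c=c_\mu:=-\ln(1-\gamma^k(1-e^{-a}))$ for $A_n-\mu_n$ and $c=v\gamma^k/y$ for $\mu_n(1-s_n)$ — so $(A_n-s_n\mu_n)^{|\theta_n|}\to e^{-v\gamma^k/y}$ precisely when $v\gamma^k/y<c_\mu$. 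Running this through the same logarithmic computation as in (a), now with $\gamma_n^{\tau_n}|\theta_n|^{-1}\to y\gamma^{-k}$, gives $E(s_n^{Z_n(\tau_n)};T_n>t_n)\to(1-q)\bigl(1-\exp\{-y\gamma^{-k}(1-e^{-v\gamma^k/y})\}\bigr)$; dividing by the limit of $P(T_n>t_n)$ from (a) yields the conditional limit. The substitution $v=\hat u(y\gamma^{-k})$ converts $y\gamma^{-k}(1-e^{-v\gamma^k/y})$ into $u$ and the range $v\gamma^k/y<c_\mu$ into $u<y(1-e^{-a})$, producing $(1-e^{-u})/(1-e^{-y(1-e^{-a})})$ — this is (b$_1$). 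For (b$_2$) the same computation applies with $k=k_n\to\infty$: then $\gamma_n^{k_n}\to0$ and $\gamma_n^{k_n}|\theta_n|^{-1}\sim y\gamma_n^{-\tau_n}\to\infty$, so $\gamma^{-k}$ is effectively replaced by the diverging factor $\gamma_n^{-\tau_n}$ and $\hat u(y\gamma_n^{-k_n})\to u$, leaving the limit $(1-e^{-u})/(1-e^{-y(1-e^{-a})})$ with no change of variables.

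I expect the main difficulty to be the bookkeeping of the $[\,\cdot\,]^{1/|\theta_n|}$ asymptotics: keeping error terms inside the brackets at the correct order (distinguishing $O(|\theta_n|)$ from $O(|\theta_n|^2)$), correctly deciding in the numerator of (b) which of the two exponentially small contributions to $A_n-s_n\mu_n$ dominates — this is exactly what produces the admissible range $u<y(1-e^{-a})$ and the transformation $\hat u$ — and, in (b$_2$), controlling the joint passage $n\to\infty$, $k_n\to\infty$ so that these estimates stay uniformly valid.
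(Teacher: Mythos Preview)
Your proposal is correct and follows essentially the same route as the paper. Both arguments rest on the explicit iteration formula, formula \eqref{Tank3} (which is exactly your decomposition $E(s^{Z_n(\tau_n)};T_n>t_n)=f_n(\tau_n,s\mu_n)-f_n(\tau_n,s\nu_n)$), and the asymptotic analysis of $[\,\cdot\,]^{1/|\theta_n|}$ via $\gamma_n^{\tau_n}|\theta_n|^{-1}\to y\gamma^{-k}$ and $(A_n-1)^{|\theta_n|}\to e^{-a}$. The only cosmetic difference is the order of substitution: the paper inserts $s=e^{-\lambda\hat\theta_n}$ with $\hat\theta_n=(1-uy^{-1}\gamma_n^k)^{y\gamma_n^{-t_n}}$, which already encodes the $\hat u$-change of variables, and then takes $n\to\infty$; you instead compute the limit transform as a function of an auxiliary variable $v$ and only afterwards substitute $v=\hat u(y\gamma^{-k})$, which has the mild advantage of making transparent \emph{why} the transformation $\hat u$ and the range $u<y(1-e^{-a})$ arise (namely, from which of $A_n-\mu_n$ and $\mu_n(1-s_n)$ dominates). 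For parts (a) and (b$_2$) the paper simply says ``similarly to Proposition~\ref{peps2}'', and your outlined computation is precisely that.
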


Here, $\eps_n\sim(1-q)(1-\gamma)^{1/|\theta_n|}$ and by Proposition \ref{peps3}-a,  given a small defect value $\eps$, the absorption time is again of order $\ln\ln \eps^{-1}$. If $A_n\equiv1$, then $a=\infty$, and convergence in Proposition \ref{peps3}-a is given by  \eqref{same}.
To see a connection of the convergence in Proposition \ref{peps3}-b$_1$  to that of Proposition \ref{peps3}-b$_2$, notice that $\hat u(x)\to u$, as $x\to\infty$.


\section{Proofs of Proposition \ref{HR} and Theorems \ref{MC} and \ref{LD}}\label{pr1}

\subsection{Proof of Proposition \ref{HR}}

Assume $\gamma>0$. Putting
$$H_t(s)={f(t,s)-q\over(s-q) \gamma^t },\quad 0\le s\le 1,\quad t\ge1,$$
observe that
$$H_t(s)=\prod_{j=0}^{t-1}h(f(j,s)),\quad h(s)={f(s)-q\over (s-q)\gamma}.$$
It is easy to check that  $h(\cdot)$ is a generating function with $h(q)=1$. (In fact, ${f(s)-f(q)\over s-q}$ is a tail generating function naturally linked to the reproduction law $f(\cdot)$, see \cite{Sagitov-2017}.) It follows that $H_t(\cdot)$
is also a generating function such that $H_t(q)=1$.

Since $h(f(t,s))<1$  for $s<q$, and $h(f(t,s))>1$  for $s>q$, we conclude that  $H_{t+1}(s)<H_t(s)$ for $s<q$, and $H_{t+1}(s)>H_t(s)$ for $s>q$.
Due to this monotonicity property, we have $H_t(s)\to H(s)$, as $t\to\infty$, where the limit function $H(s)$ has the stated form.

To finish the proof of Proposition  \ref{HR}-a it remains to show that $H(1)<\infty$ or equivalently,
\[\sum_{j=1}^\infty (h(f(j,1))-1)<\infty.\]
The last is indeed true because
$$h(f(t,1))-1\le \left(1-{\eps\over 1-q}\right)^t c,\quad t>t_0,$$
for some finite $c$ and $t_0$.
This upper bound is justified using two observations: on one hand, we have
\[{h(s)-h(q)\over s-q}\to {f''(q)\over \gamma}\in(0,\infty),\quad s\to q,\]
and on the other hand,
\[f(t,1)-q\le (1-q)\big(1-{\eps\over 1-q}\big)^t,\]
which is due to the following  convexity property of $f(\cdot)$
\[f(s)\le q+ (s-q){ 1-q-\eps\over 1-q},\quad s\in[q,1].\]

Assume now $\gamma=0$, or equivalently $l\ge2$. By iterating the function $f(s)=p_ls^l b(s)$, we get the following representation
\begin{equation}\label{Rt}
f(t,s)=\pi_t(s R_t(s))^{l^t},\quad R_t(s)=\prod_{j=1}^{t} \big(b(f(j-1,s))\big)^{l^{-j}},\quad t\ge0.
\end{equation}
A straightforward adjustment to the defective case $f(1)<1$ of the argument used in \cite[Prop. 3]{athreya1994} shows that the sequence of monotonely increasing functions $R_t(\cdot)$
has a well defined limit
$$R(s)=\lim_{t\to\infty} R_t(s)=\prod_{j=1}^{\infty} b(f(j-1,s))^{l^{-j}},\quad s\in [0,1],$$
and moreover, that
$$\lim_{t\to\infty} \left(R_t(s)/R(s)\right)^{l^t}=1.$$
This proves the main assertion of Proposition  \ref{HR}-b. It remains to verify the stated upper bound for $R(1)$ which in terms of $\rho=p_l^{\frac{1}{l-1}}R(1)$, is equivalent to the inequality $\rho< 1$.
Since $f(t,1)\to q=0$, the relation
\[f(t,1)\sim\pi_tR(1)^{l^t}=p_l^{-\frac{1}{l-1}} \rho^{l^t},\quad t\to\infty\]
indeed implies that $\rho< 1$. This also gives \eqref{tail2}.

\subsection{Proof of Theorem \ref{MC}}
We will need the following relations
\begin{align}
 P(T>t|Z(k)=i)&=f^i(t-k,1)-f^i(t-k,0),\label{Tank1}\\
E(s^{Z(k)}| T>t)&=\frac{f(k,s f(t-k,1)))-f(k,s f(t-k,0)))}{f(t,1)-f(t,0)},\label{Tank2}
\end{align}
holding for $0\le k\le t<\infty$,  $s\in [0,1]$. Relation \eqref{Tank1} follows from
$$\{T>t\}=\{T_\Delta>t\}\setminus\{T_0\le t\}$$
and
\begin{align*}
 P(T_\Delta>t|Z(k)=i)&=P(Z(t-k)\neq \Delta)^i=f^i(t-k,1),\\
 P(T_0\le t|Z(k)=i)&=P(Z(t-k)=0)^i=f^i(t-k,0).
\end{align*}
Relation \eqref{Tank2} is obtained using \eqref{Tank1} as follows

\begin{align*}
E(s^{Z(t)}| T>t+k)
&=\frac{E(s^{Z(t)} P(T>t+k|Z(t)))}{P(T>t+k)}\\
&=\frac{E((s f(k,1))^{Z(t)})-E[(s f(k,0))^{Z(t)})}{f(t+k,1)-f(t+k,0)}\\
&=\frac{f(t,s f(k,1)))-f(t,s f(k,0)))}{f(t+k,1)-f(t+k,0)}.
\end{align*}

Applying  \eqref{Tank2} and Proposition \ref{HR}-a, we get
\begin{align*}
E(s^{Z(t-k)}| T>t)&=\frac{f(t-k,sf(k,1))-f(t-k,sf(k,0))}{f(t-k,f(k,1))-f(t-k,f(k,0))}\\
&\to\frac{(sf(k,1)-q)H(sf(k,1))-(sf(k,0)-q)H(sf(k,0))}{(f(k,1)-q)H(f(k,1))-(f(k,0)-q)H(f(k,0))}.
\end{align*}
In particular,
\begin{equation*}
E(s^{Z(t)}| T>t)\to\frac{(s-q)H(s)+qH(0)}{(1-q)H(1)+qH(0)}=\sum_{j=1}^\infty q_js^j.
\end{equation*}
Thus, $P(Z(t-k)=j|T>t)\to q_{k,j}$ with
\begin{align*}
\sum_{j=1}^\infty q_{k,j}s^j&=\frac{(sf(k,1)-q)H(sf(k,1))-(sf(k,0)-q)H(sf(k,0))}{(f(k,1)-q)H(f(k,1))-(f(k,0)-q)H(f(k,0))}.
\end{align*}
Modifying the denominator by a repeated use of the relation
\[(f(s)-q)H(f(s))=\gamma(s-q)H(s),\]
we find
\begin{align*}
\sum_{j=1}^\infty q_{k,j}s^j
&=\gamma^{-k}\frac{(sf(k,1)-q)H(sf(k,1))-(sf(k,0)-q)H(sf(k,0))}{(1-q)H(1)+q H(0)}\\
&=\gamma^{-k}\left(\sum_{j=1}^\infty q_{j}(sf(k,1))^j-\sum_{j=1}^\infty q_{j}(sf(k,0))^j\right),
\end{align*}
which implies \eqref{qkqj} thereby finishing the proof of Theorem \ref{MC}-a.

Turning to the proof of Theorem \ref{MC}-b, observe that by \eqref{Tank1},
\begin{align*}
P(T> t|Z(t)=j_0,\ldots,Z(t-k)=j_k)
&=f(1)^{j_0}-f(0)^{j_0},
\end{align*}
implying
\begin{align*}
P(Z(t)=j_0,\ldots,Z(t-k)=j_k; T> t)
&=P(Z(t)=j_0,\ldots,Z(t-k)=j_k)(f(1)^{j_0}-f(0)^{j_0}).
\end{align*}
Similarly,
\begin{align*}
P(Z(t-k)=j_k; T> t)
&=P(Z(t-k)=j_k)(f(k,1)^{j_k}-f(k,0)^{j_k}),
\end{align*}
which gives
\begin{align*}
P(Z(t-k)=j_k)
&\sim  q_{k,j_k}(f(k,1)^{j_k}-f(k,0)^{j_k})^{-1}P(T> t).
\end{align*}
Therefore,  by the Markov property,
\begin{align*}
P(Z(t)=j_0,\ldots,Z(t-k)=j_k| T> t)
&\sim  q_{k,j_k}{(f(1)^{j_0}-f(0)^{j_0})P_{j_k,j_{k-1}}\cdots P_{j_1,j_{0}}\over f(k,1)^{j_k}-f(k,0)^{j_k}}\\
&=q_{k,j_k}Q_{j_k,j_{k-1}}^{(k)}
\cdots Q_{j_{1},j_{0}}^{(1)}.
\end{align*}

Finally, observe that $(Q_{ij}^{(k)})_{j\geq 1}$ is a proper distribution with the  probability generating function
\begin{equation*}
\sum_{j=1}^\infty Q_{ij}^{(k)} s^j=\frac{f(sf(k-1,1))^i-f(sf(k-1,0))^i}{f(k,1)^i-f(k,0)^i}.
\end{equation*}

\subsection{Proof of Theorem \ref{LD}}
Recall notation $\bar R(s)=R'(s)/R(s)$ and observe that
\begin{equation*}
\overline{R}(s)={d\over ds}\ln{R(s)}=\sum_{j=0}^{\infty} \frac{1}{l^{j+1}}\frac{b'(f(j,s))f'(j,s)}{b(f(j,s))},
\end{equation*}
where $f'(j,s)={d\over ds}f(j,s)$. 
Put furthermore, $\bar{R}_{t}(s)=\frac{R_{t}'(s)}{R_{t}(s)}$ for $s\in [0,1]$ and $t\ge0$. Using \eqref{Rt}, we obtain
\begin{equation*}
\bar{R}_{t}(s)={d\over ds}\ln{R_{t}(s)}=\sum_{j=0}^{t-1} \frac{1}{l^{j+1}}\frac{b'(f(j,s))f'(j,s)}{b(f(j,s))}.
\end{equation*}

\begin{lem}\label{ld1}
 Assume $\gamma=0$, $f'(1)<\infty$, and put
 \begin{equation*}
\delta_t= \sum_{j=t}^\infty\gamma_0\cdots\gamma_{j-1},\quad \gamma_i=f'(f(i,1)).
\end{equation*}
Then $\delta_t\to0$ as $t\to\infty$ and
\begin{equation*}
\bar{R}(s)-\bar{R}_{t}(s)<\frac{f'(1)\delta_t}{p_l},\quad s\in [0,1].
\end{equation*}
\end{lem}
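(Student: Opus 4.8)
The plan is to first establish that $\delta_t\to 0$, and then bound the tail $\bar R(s)-\bar R_t(s)$ by a uniform multiple of $\delta_t$. For the convergence $\delta_t\to0$, the key is that since $\gamma=0$ and $f'(1)<\infty$, the iterates $f(j,1)$ decrease to $q=0$, and $\gamma_j=f'(f(j,1))\to f'(0)=f'(q)=\gamma=0$; in fact, because $f'$ is increasing and $f(j,1)\to0$ geometrically (as shown in the proof of Proposition \ref{HR}-b, $f(j,1)\sim\pi_jR(1)^{l^j}$ decays doubly-exponentially, and in particular $f(j,1)\le c\rho^{l^j}$), the factors $\gamma_j$ become small very quickly. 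Since $\gamma_0\cdots\gamma_{j-1}$ is a product of factors each eventually bounded below $1$, the series $\sum_j\gamma_0\cdots\gamma_{j-1}$ converges, whence its tail $\delta_t\to0$ as $t\to\infty$. (One can be cruder: once $\gamma_j\le 1/2$ for all $j\ge j_0$, the terms are geometrically summable, so the full series converges and the tail vanishes.)

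For the main bound, I would write the difference of the two logarithmic-derivative series explicitly:
\[
\bar R(s)-\bar R_t(s)=\sum_{j=t}^\infty \frac{1}{l^{j+1}}\,\frac{b'(f(j,s))\,f'(j,s)}{b(f(j,s))},\qquad s\in[0,1].
\]
Each summand is nonnegative (the functions $b$, $b'$, $f'(j,\cdot)$ all have nonnegative coefficients), which already gives $\bar R\ge\bar R_t$ and justifies treating this as a tail estimate. Now I bound the three ingredients uniformly over $s\in[0,1]$. First, $b(s)=f(s)/(p_ls^l)$ has $b(s)\ge b(0)$; since $b$ is a generating function type object with $b(0)=p_l/p_l$... more carefully, $b(0)=\lim_{s\to0}f(s)/(p_ls^l)=p_l\cdot p_l^{-1}=1$, so $b(f(j,s))\ge 1$, hence $1/b(f(j,s))\le 1$. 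Second, $b'(f(j,s))\le b'(1)$ because $b'$ is increasing on $[0,1]$; and one checks $b'(1)=f'(1)/p_l - l$ (differentiating $f(s)=p_ls^lb(s)$ at $s=1$ and using $f(1)<1$... actually $f(1)=p_lb(1)$), giving $b'(1)\le f'(1)/p_l$. Third, $f'(j,s)\le f'(j,1)=f'(1)f'(f(1))\cdots f'(f(j-1,1))=\gamma_0\cdots\gamma_{j-1}\cdot$—wait, more precisely by the chain rule $f'(j,1)=\prod_{i=0}^{j-1}f'(f(i,1))=\prod_{i=0}^{j-1}\gamma_i$. Combining,
\[
\bar R(s)-\bar R_t(s)\le \sum_{j=t}^\infty \frac{1}{l^{j+1}}\cdot\frac{f'(1)}{p_l}\cdot \gamma_0\cdots\gamma_{j-1}
\le \frac{f'(1)}{p_l}\sum_{j=t}^\infty \gamma_0\cdots\gamma_{j-1}=\frac{f'(1)\delta_t}{p_l},
\]
using $l^{j+1}\ge1$ (in fact $\ge l\ge2$, which could even improve the constant, but the stated bound only needs $\ge1$). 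This is the claimed inequality, and strictness follows since $b'(f(j,s))<b'(1)$ unless $s=1$ and all intermediate factors are saturated, which cannot happen for the whole tail; in any case the first omitted term $1/l^{t+1}$ versus $1/l$ already forces strict inequality.

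The main obstacle I anticipate is getting the uniform bounds on $b$, $b'$ and on $f'(j,s)$ exactly right—in particular pinning down $b(0)=1$, the estimate $b'(1)\le f'(1)/p_l$, and the product formula $f'(j,1)=\prod_{i<j}\gamma_i$ via the chain rule—since these are where the constant $f'(1)/p_l$ comes from and a careless differentiation of $f=p_ls^lb(s)$ could introduce spurious terms. Everything else is a routine nonnegative-tail comparison. The convergence $\delta_t\to0$ is the easy half, relying only on the doubly-exponential decay of $f(j,1)$ established already in Proposition \ref{HR}-b.
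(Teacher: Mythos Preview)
Your proposal is correct and follows essentially the same route as the paper: write $\bar R(s)-\bar R_t(s)$ as the tail sum $\sum_{j\ge t}l^{-j-1}b'(f(j,s))f'(j,s)/b(f(j,s))$, use $b\ge1$, $b'\le b'(1)<f'(1)/p_l$, and $f'(j,s)\le f'(j,1)=\gamma_0\cdots\gamma_{j-1}$, then note $\gamma_i\to0$ to get $\delta_t\to0$. The only wobble is your intermediate formula $b'(1)=f'(1)/p_l-l$, which should read $b'(1)=f'(1)/p_l-l\,b(1)$ (you essentially catch this yourself), but the conclusion $b'(1)<f'(1)/p_l$ stands and matches the paper's source of the strict inequality.
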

\begin{proof}
Using the expressions for $\bar{R}(s)$ and $\bar{R}_{t}(s)$, as well as the inequality $b(s)\ge1$, we see that indeed
 \begin{equation*}
\bar{R}(s)-\bar{R}_{t}(s)=\sum_{j=t}^\infty \frac{b'(f(j,s))f'(j,s)}{b(f(j,s))l^{j+1}}\le b'(1)\sum_{j=t}^\infty f'(j,1)< \frac{f'(1)\delta_t}{p_l}.
\end{equation*}
The fact that $\delta_t<\infty$ follows from $\gamma_i\to0$ as $ i\to\infty$, which, in turn, is a consequence of $\gamma=0$.
\end{proof}

\begin{lem}\label{ld2}
 Assume $f'(1)<\infty$, $\gamma=0$. The sequence \eqref{ck} is strictly decreasing.
 \end{lem}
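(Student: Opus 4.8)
The plan is to show that the function $\phi(x) = 1 + x\bar R(x)$ is strictly increasing on $(0,1]$ and then combine this with the fact that $f(k,1)$ is strictly decreasing in $k$. Since $c(k) = \phi(f(k,1))$ and $f(k+1,1) = f(f(k,1),1) < f(k,1)$ for every $k$ (because $f(1) < 1$ and $f$ is strictly increasing, so iterating strictly decreases the value as long as it stays above the fixed point $q=0$), strict monotonicity of $\phi$ will immediately give $c(k+1) < c(k)$. The finiteness $c(0) < \infty$ follows from $\bar R(1) < \infty$, which in turn follows from $R'(1) < \infty$; this last fact should be extractable from the series representation of $\bar R$ together with $f'(1) < \infty$ (indeed Lemma \ref{ld1} already bounds $\bar R(s) - \bar R_t(s)$ uniformly, and each $\bar R_t$ is a finite sum of finite terms since $f'(j,1) < \infty$). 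The lower bound $c(k) > 1$ is clear because $\bar R(s) = R'(s)/R(s) > 0$ for $s > 0$, as $R$ is a generating function with strictly positive derivative.

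First I would write $x\bar R(x) = x R'(x)/R(x)$ and recall that $R(x) = \sum_{j\ge 0} r_j x^j$ is a power series with nonnegative coefficients, $r_0 = R(0) = 1 > 0$ and $R(1) < \infty$. Then $x R'(x)/R(x)$ is, up to the normalization by $R(1)$ irrelevant here, the generating-function "size-biasing" operator: if $A(x) = \sum_j a_j x^j$ with $a_j \ge 0$, then $x A'(x)/A(x) = E[N_x]$ where $N_x$ has distribution proportional to $a_j x^j$. The key classical fact is that $x \mapsto x A'(x)/A(x)$ is non-decreasing whenever $A$ has nonnegative coefficients, and strictly increasing as soon as $A$ has at least two nonzero coefficients — equivalently, as soon as $R$ is not a monomial. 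Since $R(0) = 1$ and $R(1) > 1$ (established in Proposition \ref{HR}-b), $R$ genuinely has infinitely many, or at least two, positive coefficients, so strict monotonicity holds on $(0,1]$.

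The cleanest way to verify the monotonicity of $g(x) := xR'(x)/R(x)$ is to compute $g'(x)$ and recognize a variance. Writing $g(x) = \sum_j j r_j x^j / \sum_j r_j x^j$, one gets $x g'(x) = \mathrm{Var}(N_x)$ where $N_x$ takes value $j$ with probability $r_j x^j / R(x)$; this variance is strictly positive precisely because $N_x$ is non-degenerate, which holds since $R$ is not a monomial. Hence $g'(x) > 0$ on $(0,1]$, so $\phi = 1 + g$ is strictly increasing there, and composing with the strictly decreasing sequence $f(k,1) \in (0,1)$ (note $f(k,1) > 0$ for all $k$ since $q = 0$ is only approached, never reached, as long as $p_l > 0$ with $l \ge 2$, so trajectories of minimal size persist) yields the chain of strict inequalities $1 < \cdots < c(k+1) < c(k) < \cdots < c(0) < \infty$.

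The main obstacle I anticipate is not the monotonicity argument itself — that is a standard size-biasing/variance computation — but rather confirming the two auxiliary facts needed to pin down the endpoints: that $\bar R(1) < \infty$ (so $c(0) < \infty$), and that $R$ is genuinely non-degenerate (not a monomial) so that the monotonicity is strict. For the former I would lean on Lemma \ref{ld1} applied at $s = 1$ together with $\bar R_t(1) < \infty$ for each finite $t$; for the latter I would argue that if $R$ were a monomial $x^m$ then $R(0) = 1$ would force $m = 0$, contradicting $R(1) > 1$. With these in hand the proof is short.
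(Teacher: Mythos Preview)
Your approach differs from the paper's, and the size-biasing idea is appealing: show that $\phi(x)=1+x\bar R(x)$ is strictly increasing on $(0,1]$ via the variance identity $x\,\tfrac{d}{dx}\bigl(xR'(x)/R(x)\bigr)=\mathrm{Var}(N_x)$, then compose with the strictly decreasing sequence $f(k,1)$. The gap is that the variance argument requires the Taylor coefficients of $R$ to be nonnegative, and you have not justified this. Although Proposition~\ref{HR}(b) informally calls $R$ a ``generating function'', the proof given in the paper does not verify coefficient positivity, and the defining product
\[
R(s)=\prod_{j\ge0}\bigl(b(f(j,s))\bigr)^{l^{-j-1}}
\]
involves \emph{fractional} powers of series with nonnegative coefficients and constant term $1$; such fractional powers need not themselves have nonnegative coefficients (e.g.\ $(1+s)^{1/2}=1+\tfrac12 s-\tfrac18 s^2+\cdots$, and the first factor above is exactly $b(s)^{1/l}$). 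Without $r_j\ge0$ the ``random variable'' $N_x$ is not well defined and the inequality $\mathrm{Var}(N_x)\ge0$ has no content. So as it stands the monotonicity of $\phi$ is unproven.

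The paper sidesteps this issue entirely. Rather than proving global monotonicity of $\phi$, it uses the functional equation $f(s)R(f(s))=p_l\bigl(sR(s)\bigr)^l$, which follows immediately from the product formula for $R$. Taking logarithms and differentiating gives
\[
1+f(s)\bar R(f(s))=\frac{(\ln p_l s^l)'}{(\ln f(s))'}\,\bigl(1+s\bar R(s)\bigr),
\]
and since $(\ln f(s))'=(\ln p_l s^l)'+(\ln b(s))'$ with $(\ln b(s))'>0$ whenever $b$ is non-constant, the multiplicative factor is strictly less than $1$. Evaluated at $s=f(k,1)$ this yields $c(k+1)<c(k)$ directly, with no appeal to the signs of the coefficients of $R$. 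If you wish to keep your route, you would first need an independent proof that $R$ has nonnegative Taylor coefficients --- a nontrivial claim in the B\"ottcher regime that neither the paper nor its cited reference appears to supply.
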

\begin{proof}
It suffices to show that
\[1+f(s)\bar R(f(s)) <1+s\bar R(s),\quad s\in[0,1].\]
Using the definition of $R(\cdot)$ given in Proposition \ref{HR} it is easy to verify the equality
\[f(s)R(f(s)) = p_l(sR(s))^l,\]
which entails
\[\ln f(s)+\ln R(f(s)) = \ln p_l+l\ln s+l\ln R(s).\]
After differetiating
\[{f'(s)\over f(s)}+\bar R(f(s)) f'(s)= {l\over  s}+l\bar R(s),\]
we find
\[1+f(s)\bar R(f(s)) ={(\ln p_ls^l)'\over (\ln f(s))'} (1+s\bar R(s)),\]
where
${(\ln p_ls^l)'\over (\ln f(s))'} <1$, since
\[(\ln p_ls^l)'<(\ln p_ls^l)'+(\ln b(s))'=(\ln f(s))'.  \]
\end{proof}

\begin{lem}\label{ld3}
 If $\gamma=0$, then
\begin{align*}
 {f'(t,s)s\over f(t,s)}&=l^t(1+s\bar R_t(s)),\\
 {f''(t,s)s^2\over f(t,s)}&=l^{2t}(1+s\bar R_t(s))^2+l^t(s^2\bar R'_t(s)-1).
\end{align*}
 \end{lem}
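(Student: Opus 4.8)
The plan is to derive both identities by straightforward differentiation of the product representation \eqref{Rt}, namely $f(t,s)=\pi_t\,(sR_t(s))^{l^t}$, treating $\pi_t$ as a constant. First I would take logarithms to get $\ln f(t,s)=\ln\pi_t+l^t\bigl(\ln s+\ln R_t(s)\bigr)$, and differentiate once in $s$. Since $\tfrac{d}{ds}\ln R_t(s)=\bar R_t(s)$ by the definition of $\bar R_t$, this yields $\dfrac{f'(t,s)}{f(t,s)}=l^t\Bigl(\dfrac1s+\bar R_t(s)\Bigr)$, and multiplying through by $s$ gives the first identity. This step is routine; the only thing to check is the interchange of $\tfrac{d}{ds}$ with the finite product defining $R_t$, which is immediate because $R_t$ is a finite product of smooth factors on $[0,1]$.

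For the second identity I would differentiate the relation $f'(t,s)s=l^t f(t,s)\bigl(1+s\bar R_t(s)\bigr)$ once more in $s$. The left-hand side gives $f''(t,s)s+f'(t,s)$. The right-hand side, by the product rule, gives $l^t f'(t,s)\bigl(1+s\bar R_t(s)\bigr)+l^t f(t,s)\bigl(\bar R_t(s)+s\bar R_t'(s)\bigr)$. Now I substitute the first identity, $f'(t,s)=\tfrac{l^t}{s}f(t,s)(1+s\bar R_t(s))$, into both the leftover term $f'(t,s)$ on the left and the first term on the right, collect everything over $f(t,s)$, and multiply by $s^2/f(t,s)$. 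After grouping the $(1+s\bar R_t(s))^2$ terms with coefficient $l^{2t}$ and the remaining terms, the expression should collapse to $l^{2t}(1+s\bar R_t(s))^2+l^t(s^2\bar R_t'(s)-1)$, which is exactly the claimed formula.

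The computation is entirely mechanical, so there is no real obstacle; the one place to be careful is the bookkeeping of the $s^{-1}$, $s$, and $s^2$ factors when passing from $f'/f$ to $f''s^2/f$, and making sure the stray $+f'(t,s)$ from differentiating the left-hand side is correctly absorbed (it contributes the $-l^t$ piece, since $f'(t,s)\cdot s/f(t,s)=l^t(1+s\bar R_t(s))$ and this term enters with the opposite sign after moving it across). I would present the two differentiations as two short displays and then one line of algebra collecting terms, citing \eqref{Rt} for the starting representation and the first part of the lemma for the substitution used in the second part. Note also that the hypothesis $\gamma=0$ is used only to guarantee, via Proposition~\ref{HR}-b, that $R_t$ and hence $\bar R_t$ are well defined; the identities themselves are formal consequences of \eqref{Rt}.
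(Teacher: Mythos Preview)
Your proposal is correct and follows exactly the route the paper intends: the paper's proof is a single sentence stating that both relations are straightforward corollaries of formula~\eqref{Rt}, and your logarithmic differentiation of $f(t,s)=\pi_t(sR_t(s))^{l^t}$ is precisely the computation behind that sentence. The algebra you outline for the second identity checks out (the $-l^t$ term indeed comes from moving the stray $f'(t,s)$ across and using the first identity), so nothing needs to be changed.
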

\begin{proof}
Both relations are straightforward corollaries of formula \eqref{Rt}.
\end{proof}

Assuming $\gamma=0$, we first prove Theorem \ref{LD}-a using Lemmas \ref{ld1},  \ref{ld2} and \ref{ld3}, and then turn to the proof of Theorem \ref{LD}-b.

Let $f'(1)<\infty$.
From  \eqref{Tank2}, we compute the conditional expectation
\begin{equation*}
  E\left(Z(k)|T>t\right)=\frac{f'(k,f(t-k,1))f(t-k,1)}{f(k,f(t-k,1))},
\end{equation*}
and applying the first  relation in Lemma \ref{ld3}, we find
\begin{equation*}
  E\left(Y(k)|T>t\right)=1+f(t-k,1)\bar{R}_{k}(f(t-k,1)).
\end{equation*}
Thus the difference
\begin{align*}
 c(t-k)-E\left(Y(k)|T>t\right)=& f(t-k,1) (\bar{R}(f(t-k,1))-\bar{R}_{k}(f(t-k,1))
\end{align*}
is non-negative and bounded from above by a constant times $f(t-k,1) \delta_k$, see Lemma \ref{ld1}.
By monotonocity, we have for all $1\le k, k'\le t$,
\begin{align*}
 f(t-k,1) \delta_k
&\leq  f(t-k',1) \delta_0+\delta_{k'}.
\end{align*}
The obtained upper bound goes to 0 as first $t\to\infty$ and then $k'\to\infty$. This proves the uniform convergence stated in Theorem \ref{LD}-a.

Let $f''(1)<\infty$. To prove Theorem \ref{LD}-b it suffices to show the inequality
\begin{align*}
&Var(Y(k)|T>t)< c\,l^{-k}f(t-k,1),\quad 0\le k\le t,
\end{align*}
for some constant $c$.
Formula \eqref{Tank2} yields for $s=f(t-k,1)$,
\begin{equation*}
  Var\left(Z(k)|T>t\right)=\frac{f''(k,s)s^2}{f(k,s)}+\frac{f'(k,s)s}{f(k,s)}-\left(\frac{f'(k,s)s}{f(k,s)}\right)^2,
\end{equation*}
so that by Lemma \ref{ld3}, we get
\begin{equation*}
Var\left(Z(k)|T>t\right)=l^{k}f(t-k,1)\left(\bar{R}_{k}(f(t-k,1))+f(t-k,1) \bar{R}_{k}'(f(t-k,1))\right).
\end{equation*}

Since we already know that $\bar{R}_t(s)$ is uniformly bounded by a constant, it remains to establish a similar property for the derivative $\bar{R}'_t(s)$, which satisfies
\begin{eqnarray*}
\bar{R}_{t}'(s)<  \sum_{j=0}^\infty \frac{b''(f(j,s))f'(j,s)^2+b'(f(j,s))f''(j,s)}{l^{j+1} b(f(j,s))},
\end{eqnarray*}
 and since $b''(s)\leq f''(1)/p_l$, we obtain
 \begin{eqnarray*}
\bar{R}_{t}'(s)<\frac{f''(1)}{lp_l}\sum_{j=0}^\infty \frac{f'(j,1)^2}{l^j}+\frac{f'(1)}{lp_l}\sum_{j=0}^\infty \frac{f''(j,1)}{l^j}.
\end{eqnarray*}
We finish the proof by verifying that $\sum_{j=0}^\infty f''(j,1)<\infty$. Indeed, by the chain rule,
\begin{align*}
f''(j+1,1)&=\sum_{i=0}^{j} f'(i,1)^2 f''(f(i,1)) f'(f(i+1,1))\cdots f'(f(j,1))\\
&\le f''(1)\sum_{i=0}^{j} \gamma_0^2\cdots \gamma_{i-1}^2  \gamma_{i+1}\cdots \gamma_j,
\end{align*}
and because $\gamma_j\to0$ as $j\to\infty$, we have
\begin{align*}
\sum_{j=0}^\infty\sum_{i=0}^{j} \gamma_0^2\cdots \gamma_{i-1}^2  \gamma_{i+1}\cdots \gamma_j<\infty.
\end{align*}

\section{Proofs of Theorem \ref{extGW} and Propositions  \ref{peps}, \ref{peps2} and \ref{peps3}}\label{pr2}

For a sequence of defective GW-processes with reproduction laws $f_n(\cdot)$, we have
\begin{equation*}
P(T_n>t)=f_n(t,1)-f_n(t,0),
\end{equation*}
and by \eqref{Tank2},
\begin{equation}\label{Tank3}
E(e^{-\lambda Z_n(t-k)}| T_n>t)=\frac{f_n(t-k,e^{-\lambda} f_n(k,1))-f_n(t-k,e^{-\lambda} f_n(k,0))}{f_n(t,1)-f_n(t,0)},
\end{equation}
so that in particular,
\[E(e^{-\lambda Z_n(t)}| T_n>t)=\frac{f_n(t,e^{-\lambda})-f_n(t,0)}{f_n(t,1)-f_n(t,0)}.
\]

\subsection{Proof of Theorem \ref{extGW}}

Relation \eqref{An} is easily extended to the iterations of the generating functions
\[f_n(t,s)=r_n\hat f(t,s/r_n).\]
Therefore, if $\ln r_n\sim x/C(t_n)$, then
\[ f_n(t_n, e^{-\lambda /C(t_n)})=(1+o(1))\hat f(t_n, e^{-(\lambda+x+o(1)) /C(t_n)}),\quad n\to\infty.\]
On the other hand, by \eqref{psi} and
\[E(e^{-\lambda \hat Z(t)/C(t)}|\hat T_0>t)={\hat f(t, e^{-\lambda /C(t)})-\hat f(t, 0)\over1-\hat f(t, 0)},\]
we get
\[\hat f(t, e^{-\lambda /C(t)})\to\hat q+(1-\hat q)\Psi(\lambda),\quad t\to\infty.\]
This and the previous relation lead to the assertion of Theorem \ref{extGW}-a.

Turning to the proof of Theorem \ref{extGW}-b, observe that by   \eqref{phi},
\begin{equation*}
P(e^{- ub^t} \hat Z(t)<z|\hat T_0>t)\to \psi(u),\quad u\in(0,\infty),\quad z\in(0,\infty),
\end{equation*}
and therefore, for $\lambda\ge0$,
\[\hat f(t, e^{-\lambda e^{- ub^t}})\to\hat q+(1-\hat q)\psi(u),\quad t\to\infty,\]
implying
\begin{equation}\label{fu}
 \hat f(t, e^{-e^{- (u+o(1))b^t}})\to\hat q+(1-\hat q)\psi(u),\quad t\to\infty.
\end{equation}
If for some sequence $t_n\to\infty,$
\[\ln (1/r_n)=-e^{ -(y+o(1))b^{t_n}},\quad y\in(0,\infty),\quad n\to\infty,\]
then for fixed positive $\lambda$ and $u$, we can write
\[ f_n(t_n, e^{-\lambda  e^{- ub^{t_n}}})=(1+o(1))\hat f(t_n, \exp\{-  e^{- (u+o(1))b^{t_n}}-e^{ -(y+o(1))b^{t_n}}\}),\quad n\to\infty.\]
Applying \eqref{fu} we conclude that
\[ f_n(t_n, e^{-\lambda  e^{- ub^{t_n}}})\to\hat q+(1-\hat q)\psi(u\wedge y),\quad n\to\infty,\]
yielding
\begin{equation*}
P(e^{- ub^{t_n}}  Z(t_n)<z|T_n>t_n)\to {\psi(u\wedge y)\over\psi(y)},\quad u\in(0,\infty),\quad z\in(0,\infty),
\end{equation*}
and eventually for $u\in(0,y)$,
\[P(b^{-t_n}\ln Z_n(t_n)\le u| T_n>t_n)\to \psi(u)/\psi(y),\quad n\to\infty.\]

\subsection{Proof of Proposition \ref{peps}}

Here we deal with the sequence
\begin{equation}\label{master}
 f_n(t_n-k,s)=r_n-\left[\gamma_n^{t_n-k} (r_n-s)^{-\theta_n}+(1-\gamma_n^{t_n-k})(r_n-q_n)^{-\theta_n}\right]^{-1/\theta_n},
\end{equation}
assuming $\gamma_n\to\gamma\in(0,1)$, $\theta_n\to\theta\in(0,1]$, $q_n\to q\in [0,1)$, and $r_n\to1$ so that \eqref{aseps} holds. Note that  \eqref{aseps} implies $\gamma_n^{t_n}\to 0$.
Proposition \ref{peps}-a directly follows from two relations
\begin{align*}
 f_n(t_n,1)&=r_n-\left[\gamma_n^{t_n} (r_n-1)^{-\theta_n}+(1-\gamma_n^{t_n})(r_n-q_n)^{-\theta_n}\right]^{-1/\theta_n}\\
 &\to 1-(1-q)\left[1+(1-q)x^{-\theta}\right]^{-1/\theta},\\
 f_n(t_n,0)&=r_n-\left[\gamma_n^{t_n} r_n^{-\theta_n}+(1-\gamma_n^{t_n})(r_n-q_n)^{-\theta_n}\right]^{-1/\theta_n}\to q.
\end{align*}

Turning to Proposition \ref{peps}-b, let $k\geq 0$ and $t_n-k\to\infty$. In view of  \eqref{Tank3}, we have to show that putting $\hat\gamma_n=\gamma_n^{\frac{t_n-k}{\theta_n}}$,
\begin{align*}
& f_n(t_n-k,e^{-\lambda \hat\gamma_n}f_n(k,1))\to 1-(1-q)\left(1+(1-q)^{\theta}(\lambda+x)^{-\theta}\right)^{1/\theta},\\
 &f_n(t_n-k,e^{-\lambda \hat\gamma_n}f_n(k,0))\to q.
\end{align*}
The second convergence is easily obtained from \eqref{master} using
$$f_n(k,0)=r_n-(\gamma_n^{k} r_n^{-\theta_n}+(1-\gamma_n^{k})(r_n-q_n)^{-\theta_n})^{-1/\theta_n}\to 1-(\gamma^k+(1-\gamma^k)(1-q)^{-\theta})^{-1/\theta}.$$
The first convergence is also obtained from \eqref{master} using the following asymptotic formulas. Since $\gamma_n^{-k/\theta_n}(r_n-1)\to 0$, we have
\begin{align*}
1-f_n(k,1)\sim 1-r_n+ (\gamma_n^k (r_n-1)^{-\theta_n})^{-1/\theta_n}\sim  (r_n-1)(\gamma_n^{-k/\theta_n}-1).
\end{align*}
Thus
\begin{align*}
r_n-e^{-\lambda \hat\gamma_n}f_n(k,1)&\sim \lambda\gamma_n^{\frac{t_n-k}{\theta_n}}+(r_n-1)\gamma_n^{-k/\theta_n},
\end{align*}
implying
\begin{align*}
\gamma_n^{t_n-k}\big(r_n-e^{-\lambda \hat\gamma_n}f_n(k,1)\big)^{-\theta_n}&\sim \big(\lambda+(r_n-1)\gamma_n^{-\frac{t_n}{\theta_n}}\big)^{-\theta_n}\to (\lambda+x)^{-\theta}.
\end{align*}

\subsection{Proof of Proposition  \ref{peps2}}

Here we deal with the sequence
\[f_n(t,s)=r_n-(r_n-q_n)^{1-\gamma_n^{t}}(r_n-s)^{\gamma_n^t},\]
as $\gamma_n\to\gamma\in(0,1)$, $q_n\to q\in [0,1)$, and $r_n\to1$. We assume that \eqref{lug} holds for some $t_n\to\infty$.

Condition \eqref{lug}  gives
$$(r_n-1)^{\gamma_n^{t_n}}\to e^{-y},$$
which implies
\begin{align*}
 f_n(t_n,1)&= r_n-(r_n-q_n)^{1-\gamma_n^{t_n}}(r_n-1)^{\gamma_n^{t_n}}\to 1-(1-q)e^{-y},\\
 f_n(t_n,0)&= r_n-(r_n-q_n)^{1-\gamma_n^{t_n}}r_n^{\gamma_n^{t_n}}\to q.
\end{align*}
yielding Proposition  \ref{peps2}-a.

Let $k\geq 0$ and $t_n-k\to\infty$. To prove Proposition  \ref{peps2}-b it suffices to show that putting $\hat r_n=(r_n-1)^{uy^{-1}\gamma_n^{k}}$,
\begin{equation*}
E\left(e^{-\lambda \hat r_nZ_n(t_n-k)}| T_n>t_n\right)\to\frac{1-e^{-u}}{1-e^{-y}},\quad n\to\infty,
\end{equation*}
for $\lambda\geq 0$ and $u\in[0,y]$. This in turn, follows from
\begin{align*}
 f_n\left(t_n-k,e^{-\lambda\hat r_n}f_n(k,1)\right)&\to  1-(1-q)e^{-u},\\
f_n\left(t_n-k,e^{-\lambda\hat r_n}f_n(k,0)\right)&\to q,
\end{align*}
which we prove next.
The first of these two relations is obtained as follows: using
$$1-f_n(k,1)\sim (r_n-1)^{\gamma_n^k}(1-q)^{1-\gamma_n^k},$$
and taking into account that $ u\leq y$, we get
\begin{align*}
\left(r_n-e^{-\lambda\hat r_n}f_n(k,1)\right)^{\gamma_n^{t_n-k}}&\sim  \left(r_n-1+\lambda  \hat r_n+(r_n-1)^{\gamma_n^k}(1-q)^{1-\gamma_n^k}\right)^{\gamma_n^{t_n-k}}\\
&\sim  \left(\lambda \hat r_n\right)^{\gamma_n^{t_n-k}}\to e^{-u},
\end{align*}
and, as a consequence,
\begin{align*}
f_n\left(t_n-k,e^{-\lambda \hat r_n}f_n(k,1)\right)&=r_n-(r_n-q_n)^{1-\gamma_n^{t_n-k}}\left(r_n-e^{-\lambda \hat r_n}f_n(k,1)\right)^{\gamma_n^{t_n-k}}\\
&\to  1-(1-q)e^{-u}.
\end{align*}
The second relation follows from
\begin{equation*}
f_n(k,0)=r_n-r_n^{\gamma_n^k}(r_n-q_n)^{1-\gamma_n^k}\to 1-(1-q)^{1-\gamma^k}.
\end{equation*}

\subsection{Proof of Proposition  \ref{peps3}}

Here we deal with the sequence
\begin{align*}
f_n(t,s)=A_n-\left[\gamma_n^t (A_n-s)^{|\theta_n|}+(1-\gamma_n^t)(A_n-q_n)^{|\theta_n|}\right]^{1/|\theta_n|},
\end{align*}
as $\gamma_n\to\gamma\in (0,1)$, $q_n\to q\in [0,1)$, $A_n\to 1$, and $\theta_n\to 0$. We assume that \eqref{luga} holds for some $t_n\to\infty$.

Propositions  \ref{peps3}-a and \ref{peps3}-b$_2$ are proven similarly to Proposition  \ref{peps2}. To prove Proposition  \ref{peps3}-b$_1$, fix a $k\geq 0$ and let $t_n-k\to\infty$. We write $\hat u(x)={-x}\ln(1-u/x)$ and also
 $$\hat \theta_n=(1-uy^{-1}\gamma_n^k)^{y\gamma_n^{-t_n}}.$$
 It suffices to show that
\begin{equation*}
E\left(e^{-\lambda \hat \theta_nZ_n(t_n-k)}| T_n>t_n\right)\to\frac{1-e^{-u}}{1-e^{-y(1-e^{-a})}},\quad n\to\infty,
\end{equation*}
for $\lambda\geq 0$ and $u\in[0,y(1-e^{-a}))$, or in terms of generating functions,
\begin{align*}
 f_n\left(t_n-k,e^{-\lambda\hat \theta_n}f_n(k,1)\right)&\to 1- (1-q)e^{-u},\\
f_n\left(t_n-k,e^{-\lambda\hat \theta_n}f_n(k,0)\right)&\to q.
\end{align*}
We finish the proof by checking only the first of these two relations.

Since
$$A_n-f_n(k,1)=\left[(A_n-q_n)^{|\theta_n|}-\gamma_n^k\left(1-\left(A_n-1\right)^{|\theta_n|}\right)\right]^{1/|\theta_n|}=\left[1-\gamma^k(1-e^{-a})+o(1)\right]^{1/|\theta_n|},$$
we get
\begin{eqnarray*}
\left(A_n-e^{-\lambda\hat \theta_n}f_n(k,1)\right)^{|\theta_n|}=\left(\left[1-\gamma^k(1-e^{-a})+o(1)\right]^{1/|\theta_n|}+(\lambda+o(1))\hat \theta_n\right)^{|\theta_n|}.
\end{eqnarray*}
Using
\[\hat \theta_n^{\ |\theta_n|}\to 1-uy^{-1}\gamma^k,\]
and $ u< y(1-e^{-a})$, we obtain
\begin{align*}
f_n\left(t_n-k,e^{-\lambda \hat \theta_n}f_n(k,1)\right)&=1-(1-q)\left(1-(u/y+o(1))\gamma_n^{t_n}\right)^{1/ |\theta_n|}(1+o(1))\\
&\to  1-(1-q)e^{-u},
\end{align*}
since  $\left(1-\gamma_n^{t_n}\right)^{1/|\theta_n|}\to e^{-y}$ due to condition \eqref{luga}.

\section*{Acknowledgements}

This research is part of C. Minuesa's PhD project supported by the Spanish Ministerio de Educaci\'on, Cultura y Deporte (grants FPU13/03213 and EST15/00538), Junta de Extremadura (grant GR15105), Ministerio de Econom\'ia y Competitividad (grant MTM2015-70522-P), and the FEDER.





\begin{thebibliography}{1}

\bibitem{athreya1994}
K.~B. Athreya.
\newblock Large deviation rates for branching processes--{I} single type case.
\newblock {\em The Annals of Applied Probability}, {\bf 1994}, 4(3), 779--790.

\bibitem{Athreya-Ney}
K.B. Athreya and P.E. Ney.
\newblock {\em Branching processes.}
\newblock Springer, 1972.

\bibitem{Braunsteins-Hautphenne-2017}
P.~Braunsteins, and S.~Hautphenne,
\newblock On the extinction of {L}ower {H}essenberg branching processes with countably many types.
\newblock {\em Work in progress},
\newblock {\bf 2017}.

\bibitem{Darling}
D.A. Darling.
\newblock The {G}alton-{W}atson process with infinite mean.
\newblock {\em Journal of Applied Probability}, {\bf 1970}, 7(2), 455--456.

\bibitem{Karlin-Tavare-1982}
S.~Karlin and S.~Tavar\'{e}.
\newblock Detecting particular genotypes in populations under nonrandom mating.
\newblock {\em Mathematical Biosciences}, {\bf 1982}, 59(1), 57--75.

\bibitem{Melard-2012}
S.~M\'{e}l\'{e}ard.
\newblock Quasi-stationary distributions and population processes.
\newblock {\em Probability Surveys}, {\bf 2012}, 9, 340--410.

\bibitem{Pakes-1984}
A.G. Pakes.
\newblock The {G}alton-{W}atson process with killing.
\newblock {\em Mathematical Biosciences}, {\bf 1984}, 69(2), 171--188.

\bibitem{Sagitov-2017}
S.~Sagitov.
\newblock Tail generating functions for extendable branching processes.
\newblock {\em Stochastic Processes and their Applications}, {\bf 2017}, (in press).


\bibitem{Sagitov-Lindo-2016}
S.~Sagitov and A.~Lindo.
\newblock A special family of {G}alton-{W}atson processes with explosions.
\newblock In {\em Branching Processes and Their Applications}, volume 219 of {\em Lecture Notes
  in Statistics - Proceedings}, I.M. del Puerto, M.~Gonz\'{a}lez, C.~Guti\'{e}rrez,
  R.~Mart\'{i}nez, C.~Minuesa, M.~Molina, M.~Mota, and A.~Ramos, Eds.; Springer, 2016; 237--254.

\bibitem{Sagitov-Serra-2009}
S.~Sagitov and M.~ Serra.
\newblock Multitype Bienaym\'{e}-Galton--Watson processes escaping extinction.
\newblock {\em Advances in Applied Probability}, {\bf 2009}, 41(1), 225--246.

\bibitem{Yanev-75}
N.~M. Yanev.
\newblock Conditions for degeneracy of $\phi$-branching processes with random
  $\phi$.
\newblock {\em Theory of Probability and its Applications}, {\bf 1975},  20, 421--428.

\end{thebibliography}
\end{document}